\documentclass{amsart}
\usepackage{amsmath,amsthm,amssymb,latexsym,epic,bbm,mathbbol,comment, mathrsfs}
\usepackage{tikz-cd}
\usepackage{graphicx,enumerate,stmaryrd,color}
\usepackage[all,2cell]{xy}
\xyoption{2cell}
\usepackage[active]{srcltx}
\usepackage[parfill]{parskip}
\usepackage{enumerate}
\usepackage{url}
\usepackage{hyperref,xparse}
\usepackage{mathtools}

\newtheorem{theorem}{Theorem}
\newtheorem{lemma}[theorem]{Lemma}
\newtheorem{cor}[theorem]{Corollary}
\newtheorem{prop}[theorem]{Proposition}

\newtheorem{proposition}[theorem]{Proposition}
\newtheorem{corollary}[theorem]{Corollary}

\theoremstyle{definition}
\newtheorem{definition}[theorem]{Definition}
\newtheorem{rmk}[theorem]{Remark}

\newtheorem{example}[theorem]{Example}

\font\sc=rsfs10
\newcommand{\csym}[1]{\sc\mbox{#1}\hspace{1.0pt}}

\font\scc=rsfs7
\newcommand{\ccf}[1]{\scc\mbox{#1}\hspace{0.5pt}}

\newcommand{\cccsym}[1]

\newcommand{\on}[1]{\operatorname{#1}}

\newcommand{\setj}[1]{\left\{ #1 \right\}}

\DeclareMathAlphabet\EuRoman{U}{eur}{m}{n}
\SetMathAlphabet\EuRoman{bold}{U}{eur}{b}{n}
\newcommand{\euler}{\EuRoman}

\DeclareMathOperator{\Hom}{\mathrm{Hom}}

\begin{document}
\pagestyle{plain}
\title{Simple transitive 2-representations of bimodules over radical square zero  Nakayama algebras via localization}
\author{Helena Jonsson and Mateusz Stroi\'{n}ski}

\begin{abstract}
We study the classification problem of simple transitive 2-repre\-sen\-ta\-tions of the 2-category of finite-dimensional bimodules over a radical square zero Nakayama algebra. 
This results in a complete classification of simple transitive 2-representations whose apex is a finitary two-sided cell.
We define a notion of localization of 2-representations. 
We construct previously unknown simple transitive 2-representations as localizations of cell 2-representations.
Using the universal property of our construction we prove that any simple transitive 2-representation with finitary apex is equivalent to a localization of a cell 2-representation.
\end{abstract}

\maketitle
\section{Introduction}

The main aim of this paper is to prove a significant generalization of \cite[Conjecture~2]{Jo3}.
The conjectured result concerns the classification of a class of simple transitive $2$-representations of the $2$-category of all bimodules over the algebra $D = \Bbbk[x]/\big(x^{2}\big)$. We prove not only the aforementioned conjecture, but also an analogous statement for the case where the algebra $D$ is replaced by an arbitrary radical square Nakayama algebra.

Ever since the notion of categorification was first formulated in \cite{Cr, CF}, the role of categorical actions in various areas of mathematics has been increasingly important. Among the instances of categorical actions are celebrated results such as the categorification of the Jones polynomial in \cite{Kh} and the proof of Brou{\' e}'s conjecture for symmetric groups in \cite{CR}. The increasing abundance of categorical actions led to the emergence of systematic approaches to higher representation theory, such as those initiated in \cite{CR}, \cite{Os}, and \cite{MM1}.

The program initiated by \cite{MM1} developed the setting of so-called finitary $2$-cate\-go\-ries and their finitary $2$-representations. It provides an axiomatic framework for $2$-representation theory which crucially admits an abstract, well-behaved categorification of a simple module, known as a simple transitive $2$-representation. 
The resulting $2$-representation theory is general enough to include many interesting previously known $2$-representations, especially those coming from Lie theory, such as the actions of projective functors on the BGG category $\mathcal{O}$. At the same time, due to the multiple finiteness assumptions, it is also restrictive enough to allow for classification of simple transitive $2$-representations for various finitary \mbox{$2$-cate\-gories.}

Given a finite-dimensional algebra $A$, a monoidal subcategory $\mathcal{A}$ of $(A\!\on{-mod-}\!A,\otimes_{A})$ which admits an additive generator gives rise to a finitary $2$-category $\csym{A}$, by delooping and strictifying. 
The $2$-category of $2$-representations of $\mathcal{A}$ is biequivalent to the $2$-category of $2$-representations of $\csym{A}$.
An important example of a finitary $2$-category is $\csym{C}_{\!A}$, which is obtained from $(\on{add}\setj{A, A \otimes_{\Bbbk} A}, \otimes_{A})$ via the procedure described above.
One of the earliest complete classification results in finitary $2$-representation theory, \cite[Theorem~15]{MM5}, states that, for self-injective $A$, any simple transitive $2$-representation of $\csym{C}_{\!A}$ is equivalent to a so-called cell $2$-representation. The generalization, \cite[Theorem~12]{MMZ2}, allows us to remove the assumption about $A$ being self-injective, but it is a much more difficult result. This can be explained by the fact that $\csym{C}_{\!A}$ is fiat (in the terminology of \cite{Os, EGNO}, it admits left and right duals) if and only if $A$ is self-injective.

 Most of the finitary $2$-categories that brought the subject its initial interest are fiat. However, from the point of view of $2$-representation theory, the question of removing the assumption about $A$ being self-injective in the example above is natural. Further, it is easy to construct other non-fiat $2$-categories of bimodules over finite-dimensional algebras. The simple transitive $2$-representations of an example of such a $2$-category were studied in \cite{Zi2}, and later classified in \cite{St}. In the fiat case, the main tool for classification of simple transitive $2$-representations is the study of coalgebra $1$-morphisms, as described in \cite{MMMT}. In the non-fiat case, \cite{St} constructs $2$-representations as $2$-categorical weighted colimits of previously known $2$-representations.
 
 In view of the preceding paragraphs, an immediate question to ask is whether it is possible to classify simple transitive $2$-representations of {\it $A\!\on{-mod-}\!A$ itself}, for some well-chosen $A$. This $2$-category is finitary if and only if $A \otimes_{\Bbbk} A^{\on{op}}$ is of finite type. All such algebras belong to a single countable family, see \cite[Theorem~1]{MZ2} for a detailed account.
 In this paper we consider a countable family $\{ \Lambda_n\}$ of algebras such that $\Lambda_n\otimes_\Bbbk \Lambda_n^{\mathrm{op}}$ is of tame type, consisting of radical square  zero Nakayama algebras.

 Our object of study is the family of $2$-categories $\csym{D}_{n}$ associated to $\Lambda_n\!\on{-mod-}\!\Lambda_n$. In particular, these $2$-categories are not finitary, neither do they fit the generalized finitary setting of \cite{Ma1, Ma2}.
 We use and generalize the results of \cite{Jo2,Jo3}. From \cite[Theorem~1]{Jo2}, we know that all except for one of the idempotent $\mathcal{J}$-cells of $\csym{D}_{n}$ are finitary. 
 
 Our main result, Theorem~\ref{thm_main}, gives a complete classification of simple transitive $2$-representations of $\csym{D}_{n}$ with finitary apex, for all $n \geq 1$.
 
 To prove this, we use the approach of \cite{St} and construct the $2$-representations indicated in \cite[Conjecture 2]{Jo3} as $2$-categorical colimits. Interpreted in the $2$-category $\mathbf{Cat}$, the colimit we consider gives the classical notion of localization of categories, as described in \cite{GZ}. Indeed, the $2$-representations we construct can be described as localizations of cell $2$-representations, acting on localizations of the target categories of said cell $2$-representations. In particular, we prove that, generally, the localization of a simple transitive $2$-representation is simple transitive (Proposition~\ref{FinitaryLocalization}). 
 
 In our case, the use of colimits can be argued to be more essential to the classification than in the case of \cite{St}. Indeed, \cite[Theorem~21]{Jo3} shows that the $2$-representations we construct cannot be obtained using coalgebra $1$-morphisms. This illustrates a fundamental difference between the general (locally) finitary $2$-representation theory and the $2$-representation theory of fiat $2$-categories.
 
  We remark that, following \cite{MMMTZ2}, we pass from the $2$-categorical setting to the bicategorical setting. As explained in \cite[Section~2]{MMMTZ2}, the resulting classification problem for simple transitive $2$-representations is the same as that in the strict $2$-categorical setting.

  This paper is organized as follows: in Section~\ref{s2} we introduce the bicategorical setting in which we work and recall necessary definitions and facts regarding birepresentations. 
  In Section~\ref{s3} we introduce localization of birepresentations and prove some elementary results about it.
  In Section~\ref{s4} we introduce and give a detailed description of the bicategory $\csym{D}_{n}$ of finite-dimensional bimodules over a radical square zero Nakayama algebra and state the main theorem.
  Section~\ref{s5} determines the possible action matrices associated to a simple transitive $2$-representation of $\csym{D}_{n}$. 
  Finally, in Section~\ref{s6} we use localization to construct a family of new birepresentations, and prove that any simple transitive birepresentation with finitary apex is equivalent to a member of this family. Thereby we  complete the proof of our main result,
  Theorem~\ref{thm_main}.

\vspace{5mm}
\textbf{Acknowledgements.}
This research is partially supported by Göran Gustafssons Stiftelse. The authors would like to thank their advisor Volodymyr Mazorchuk for many helpful discussions and comments.

\section{Preliminaries on bicategories and birepresentations}\label{s2}

\subsection{Bicategorical setup}
Throughout we assume $\Bbbk$ to be a field. In Section~\ref{s3} this assumption could be relaxed by letting $\Bbbk$ be an integral domain, whereas in Sections~\ref{s4},~\ref{s5},~\ref{s6} we will further assume that $\Bbbk$ is algebraically closed and of characteristic zero.

A bicategory $\csym{C}$ is {\em $\Bbbk$-linear} if, for all $\mathtt{i,j} \in \on{Ob}\csym{C}$, the category $\csym{C}(\mathtt{i,j})$ is $\Bbbk$-linear and if composition of $1$-morphisms is given by $\Bbbk$-bilinear functors. 
Observe that we do not implicitly assume $\Bbbk$-linear categories to be additive or idempotent split - these are properties we treat separately, and, indeed, many of the $\Bbbk$-linear categories we consider (e.g. free $\Bbbk$-linear categories) will not have these properties.
Equivalently, a $\Bbbk$-linear bicategory is a category enriched in the monoidal $2$-category $\mathbf{Cat}_{\Bbbk}$, consisting of $\Bbbk$-linear categories, $\Bbbk$-linear functors and natural transformations. 
For detailed accounts of monoidal bicategories and of enriched bicategories, see \cite{GPS}, \cite{GS}.

 A category $\mathcal{C}$ is {\em finitary} if it is small and equivalent to the category of finitely generated projective modules over a finite-dimensional associative $\Bbbk$-algebra. A $\Bbbk$-linear bicategory $\csym{C}$ is {\em finitary} if, for all $\mathtt{i,j} \in \on{Ob}\csym{C}$, the category $\csym{C}(\mathtt{i,j})$ is finitary.
 
 Given $\Bbbk$-linear bicategories $\csym{C}, \csym{D}$, a $\Bbbk$-linear pseudofunctor $\mathbf{M}: \csym{C} \rightarrow \csym{D}$ is a pseudofunctor of the underlying bicategories such that the functor $\mathbf{M}_{\mathtt{i,j}}$ is $\Bbbk$-linear, for all $\mathtt{i,j} \in \on{Ob}\csym{C}$.

 By $\mathfrak{A}_{\Bbbk}$ we denote the $(1,2)$-full $2$-subcategory of $\mathbf{Cat}_{\Bbbk}$ whose objects are finitary categories. By $\mathfrak{R}_{\Bbbk}$ we denote the $2$-full $2$-subcategory of $\mathbf{Cat}_{\Bbbk}$ whose objects are abelian $\Bbbk$-linear categories and whose $1$-morphisms are right exact functors between such categories. By $\mathbf{Cat}_{\Bbbk}^{\euler{D}}$ we denote the $(1,2)$-full $2$-subcategory of $\mathbf{Cat}_{\Bbbk}$ whose objects are $\Bbbk$-linear, additive and idempotent split categories. Note that such a category is finitary if and only if it is hom-finite and has finitely many isomorphism classes of indecomposable objects.
 
 For a $\Bbbk$-linear bicategory $\csym{C}$, we refer to $\Bbbk$-linear pseudofunctors $\csym{C} \rightarrow \mathbf{Cat}_{\Bbbk}$ as {\it birepresentations of $\csym{C}$}.
 In particular, a {\it finitary birepresentation} is a $\Bbbk$-linear pseudofunctor $\csym{C} \rightarrow \mathfrak{A}_{\Bbbk}$. Similarly, an {\it abelian birepresentation of $\csym{C}$} is a $\Bbbk$-linear pseudofunctor $\csym{C} \rightarrow \mathfrak{R}_{\Bbbk}$. We will mainly be focusing on finitary birepresentations.
 
 Given $\Bbbk$-linear bicategories $\csym{C},\csym{D}$, we denote by $[\csym{C},\csym{D}]$ the $\Bbbk$-linear bicategory of $\Bbbk$-linear pseudofunctors from $\csym{C}$ to $\csym{D}$, strong transformations between such pseudofunctors, and modifications of such transformations. Similarly to $\Bbbk$-linear categories and functors, the suitable ``linearizations'' of standard bicategorical notions, as presented for instance in \cite{Lei}, coincide with the enriched notions given in \cite{GS}.
 
 We remark that, in view of the strictification results of \cite[Section~4.2]{Po}, \cite[Section~2.3]{MMMTZ2}, classification problems for $2$-representations (in the sense of \cite{MM1}) of a $\Bbbk$-linear $2$-category $\csym{C}$ are equivalent to the same classification problems concerning birepresentations of $\csym{C}$, or any bicategory biequivalent to $\csym{C}$.
 
 A finitary birepresentation $\mathbf{M}$ of $\csym{C}$ is called {\em simple transitive} if it does not admit a non-trivial $\csym{C}$-stable ideal, i.e. a family of ideals $\mathcal{I} = (\mathcal{I}(\mathtt{i}) \subseteq \mathbf{M}(\mathtt{i}))_{\mathtt{i} \in \on{Ob}\ccf{C}}$ such that $\mathbf{M}\mathrm{F}(\mathcal{I}(\mathtt{i})) \subseteq \mathcal{I}(\mathtt{j})$, for all $\mathrm{F} \in \csym{C}(\mathtt{i,j})$.
 
 If $\csym{C}$ has a unique object $\mathtt{i}$, the {\em rank} of $\mathbf{M}$ is the number of isomorphism classes of indecomposable objects of $\mathbf{M}(\mathtt{i})$. If $\csym{C}$ has more objects, one may consider a function $\on{rank}_{\mathbf{M}}: \on{Ob}\csym{C} \rightarrow \mathbb{N}$.
 
 With the exception of Section~\ref{s3}, we will let $\csym{C}$ be a finitary bicategory and $\mathbf{M}$ a finitary birepresentation of $\csym{C}$, unless otherwise stated.

 \subsection{Cells}
 
 The {\em left preorder} $\leq_{L}$ on the set of isomorphism classes of indecomposable $1$-morphisms of $\csym{C}$ is defined by setting $F \leq_{L} G$ if there is a $1$-morphism $H$ such that $G$ is a direct summand of $H \circ F$. We denote the resulting equivalence relation by $\sim_{L}$, and refer to its equivalence classes as {\em left cells.} Similarly one defines the right and two-sided preorders $\leq_{R}, \leq_{J}$, together with right and two-sided equivalence relations and right and two-sided cells.
 
 Let $\mathbf{M}$ be a simple transitive birepresentation of $\csym{C}$. By \cite[Lemma 1]{CM}, the collection of two-sided cells of $\csym{C}$ which are not annihilated by $\mathbf{M}$ admits a unique maximal element $\mathcal{J}$ with respect to the two-sided order. We refer to $\mathcal{J}$ as the {\it apex} of $\mathbf{M}$.
    
 A two-sided cell $\mathcal{J}$ is called \emph{idempotent} given that there exist $F,G,H\in \mathcal{J}$ such that $F$ is a direct summand of $G\circ H$. The apex is necessarily idempotent, see \cite[Lemma~1]{CM}.
 
 Let $\mathcal{L}$ be a left cell of $\csym{C}$. There is then a unique object $\mathtt{i}$ of $\csym{C}$ which is the domain of all $1$-morphisms in $\mathcal{L}$. To $\mathcal{L}$ we associate a simple transitive subquotient $\mathbf{C}_{\mathcal{L}}$ of the principal birepresentation $\mathbf{P}_{\mathtt{i}}=\csym{C}(\mathtt{i},-)$; see \cite[Subsection~3.3]{MM5} for details. We call 
 $\mathbf{C}_{\mathcal{L}}$ is called the \emph{cell birepresentation} corresponding to $\mathcal{L}$.
 
 \subsection{Action matrices}
 
 Let $\mathcal{C,D}$ be a pair of finitary categories and let $F: \mathcal{C} \rightarrow \mathcal{D}$ be a $\Bbbk$-linear functor. Let $X_{1},\ldots, X_{n}$ be a complete, irredundant list of isomorphism classes of indecomposable objects in $\mathcal{C}$ and let $Y_{1},\ldots, Y_{m}$ be such a list for $\mathcal{D}$. With respect to these, the {\em action matrix} $[F]$ of $F$ is the $m\times n$ matrix with non-negative integer entries, defined by
 \[
  [F]_{ij} =  \text{multiplicity of }Y_{i} \text{ as a direct summand of } FX_{j} .
 \]
 
 In particular, given a finitary birepresentation $\mathbf{M}$ of $\csym{C}$ and a $1$-morphism $F$ of $\csym{C}$, the functor $\mathbf{M}F$ satisfies the above assumptions. Hence we obtain an action matrix $[\mathbf{M}F]$. If there is no risk of ambiguity, we may sometimes write $[F]$ for $[\mathbf{M}F]$.

 \subsection{Abelianization}
 Given a finitary birepresentation $\mathbf{M}$ of $\mathscr{C}$, we have its \emph{(projective) abelianization} $\overline{\mathbf{M}}$ as defined  in \cite[Section~3]{MMMT}. 
 It is a pseudofunctor $\mathscr{C}\to \mathfrak{R}_\Bbbk$, so that $\overline{\mathbf{M}}$ is an abelian birepresentation of $\mathscr{C}$. 
 Up to equivalence, $\mathbf{M}$ is recovered by restricting to the subcategories of projective objects in the underlying (abelian) categories of $\overline{\mathbf{M}}$.
 
 \subsection{Additive and Karoubi envelopes}~\\
 Let $\mathbf{Cat}_{\Bbbk}^{\oplus}$ denote the $(1,2)$-full $2$-subcategory of $\mathbf{Cat}_{\Bbbk}$ whose objects are additive $\Bbbk$-linear categories. Similarly, let $\mathbf{Cat}_{\Bbbk}^{\mathcal{K}}$ denote the $(1,2)$-full $2$-subcategory of $\mathbf{Cat}_{\Bbbk}$ whose objects are idempotent split $\Bbbk$-linear categories. The respective inclusion $2$-functors $\mathbf{Cat}_{\Bbbk}^{\oplus} \hookrightarrow \mathbf{Cat}_{\Bbbk}$ and $\mathbf{Cat}_{\Bbbk}^{\mathcal{K}} \hookrightarrow \mathbf{Cat}_{\Bbbk}$ admit bicategorically left adjoint $2$-functors $(-)^{\oplus}, (-)^{\mathcal{K}}$, known as the {\it additive} and {\it Karoubi} envelopes, respectively. The Karoubi envelope restricts to a bicategorical left adjoint to the inclusion $\mathbf{Cat}_{\Bbbk}^{\euler{D}} \hookrightarrow \mathbf{Cat}_{\Bbbk}^{\oplus}$. Thus the composition $(-)^{\euler{D}} := (-)^{\on{Kar}} \circ (-)^{\oplus}$ is a bicategorical left adjoint to the inclusion $\mathbf{Cat}_{\Bbbk}^{\euler{D}} \hookrightarrow \mathbf{Cat}_{\Bbbk}$. For more detailed accounts of the envelope constructions and of the bicategorical adjunction given above, see \cite{Ri} and \cite[Section~3]{St}.
 
\section{Localization of birepresentations}\label{s3}

\subsection{Bicategorical weighted colimits and localization}
 We now give a brief recollection of the general treatment of bicategorical weighted colimits of birepresentations given in \cite{St}.

Given a small $\Bbbk$-linear bicategory $\csym{I}$, a $\Bbbk$-linear bicategory $\csym{B}$, a $\Bbbk$-linear pseudofunctor $\mathbf{F}: \csym{I} \rightarrow \csym{B}$ and a $\Bbbk$-linear pseudofunctor $\mathbf{W}: \csym{I}^{\on{op}} \rightarrow \mathbf{Cat}_{\Bbbk}$, a {\it $\mathbf{W}$-weighted $\Bbbk$-linear bicategorical colimit of $\mathbf{F}$} is an object $\mathbf{W}\star \mathbf{F}$ of $\csym{B}$ together with $\Bbbk$-linear equivalences of categories
\begin{equation}\label{ColimitEquation}
 \csym{B}(\mathbf{W}\star \mathbf{F}, \mathtt{b}) \simeq [\csym{I}^{\on{op}},\mathbf{Cat}_{\Bbbk}](\mathbf{W},\csym{B}(\mathbf{F}-,\mathtt{b})) \text{, for } \mathtt{b} \in \on{Ob}\csym{B},
\end{equation}
strongly natural in $\mathtt{b}$. Combining various results of \cite{GS}, \cite{Ke1}, \cite{Ke2} and \cite{Ri}, the main conclusions of \cite[Section~3]{St} are the following: 
\begin{itemize}
 \item if $\csym{B} = [\csym{C},\mathbf{Cat}_{\Bbbk}^{\euler{D}}]$, then $\mathbf{W} \star \mathbf{F}$ exists for all choices of $\mathbf{W},\mathbf{F}$;
 \item weighted colimits in $[\csym{C},\mathbf{Cat}_{\Bbbk}^{\euler{D}}]$ can be computed pointwise in $\mathbf{Cat}_{\Bbbk}^{\euler{D}}$;
 \item the pointwise computation can be facilitated by using the Karoubi and additive envelope $2$-functors, which preserve bicategorical colimits.
\end{itemize}

One of the earliest studied bicategorical colimits is localization of categories, as described in \cite{GZ}. In the setup described above, we may formulate it as follows:
\begin{itemize}
 \item let $\csym{I}$ be the $2$-category
$\begin{tikzcd}[ampersand replacement = \&, sep = small]
   \mathtt{i} \arrow[r, shift left=6pt, ""{name=U, below}, "\mathrm{S}"] \arrow[r, shift right=6pt, swap, ""{name=D, above}, "\mathrm{T}"] \& \mathtt{j}
   \arrow[Rightarrow, from=U, to=D, swap, start anchor ={[yshift =3pt]}, end anchor = {[yshift=-3.8pt]}]
  \end{tikzcd}$, with two objects, two parallel non-identity $1$-morphisms, and a unique non-identity $2$-morphism between these. 
  \item Let $\mathbf{W}: \csym{I}^{\on{op}} \rightarrow \mathbf{Cat}$ be the $2$-functor depicted by
  $\begin{tikzcd}[ampersand replacement = \&, sep = small]
   \Big({\scriptscriptstyle \euler{s}} \arrow[r, shift left, "{\scriptscriptstyle f}"] \&[-0.6em] {\scriptscriptstyle \euler{t}}\Big) 
   \arrow[l, shift left, "{\scriptscriptstyle f^{-1}}"]
   \arrow[from=r, shift right=6pt, swap, "{\scriptscriptstyle \euler{s}}", ""{name=U, below}] \arrow[from=r, shift left=6pt, "{\scriptscriptstyle \euler{t}}" , ""{name=D, above}] \& \euler{1}
   \arrow[Rightarrow, from=U, to=D, "{\scriptscriptstyle f}", start anchor ={[yshift =3pt]}, end anchor = {[yshift=-3.8pt]}]
  \end{tikzcd}$,
  sending
  \begin{itemize}
      \item $\mathtt{j}$ to the terminal category $\euler{1}$,
      \item $\mathtt{i}$ to the walking isomorphism category,
      \item $\mathrm{S}$ to the functor choosing the domain of the walking isomorphism $f$,
      \item $\mathrm{T}$ to the codomain of $f$,
      \item the unique non-identity $2$-morphism to the natural transformation given by $f$.
  \end{itemize}
  \item Let $\mathcal{C}$ be a category and let $\euler{2}$ be the walking arrow category, $\euler{2} = 1 \xrightarrow{\omega} 2$. Consider the arrow category $\mathcal{C}^{\rightarrow} := \mathbf{Cat}(\euler{2}, \mathcal{C})$. The functors 
  \[
   \on{dom}: \euler{1} \xrightarrow{1 \mapsto 1} \euler{2} \text{ and } \on{cod}: \euler{1} \xrightarrow{1 \mapsto 2} \euler{2}
  \]
   from the terminal category to $\euler{2}$ induce functors 
   \[
   \mathcal{C}^{\rightarrow} \xrightarrow{\on{dom}_{\mathcal{C}}} \mathcal{C} \text{ and } \mathcal{C}^{\rightarrow} \xrightarrow{\on{cod}_{\mathcal{C}}} \mathcal{C},
   \]
   respectively. Further, the natural transformation $\omega: \on{dom} \Rightarrow \on{cod}$ gives rise to the natural transformation $\omega_{\mathcal{C}}: \on{dom}_{\mathcal{C}} \Rightarrow \on{cod}_{\mathcal{C}}$. 
  \item 
  Let $S$ be a collection of morphisms of $\mathcal{C}$; equivalently, $S$ gives a collection of objects of $\mathcal{C}^{\rightarrow}$. Let $\mathcal{S}$ be the full subcategory of $\mathcal{C}^{\rightarrow}$ satisfying $\on{Ob}\mathcal{S} = S$, and let $\mathrm{I}: \mathcal{S} \rightarrow \mathcal{C}^{\rightarrow}$ be its inclusion functor.
  \item 
  We define a $2$-functor $\mathbf{F}: \csym{I} \rightarrow \mathbf{Cat}$ by the diagram
  $\begin{tikzcd}[ampersand replacement = \&, sep = huge]
   \mathcal{S} \arrow[r, shift left=6pt, ""{name=U, below}, "\on{dom}_{\mathcal{C}}\circ \mathrm{I}"] \arrow[r, shift right=6pt, swap, ""{name=D, above}, "\on{cod}_{\mathcal{C}}\circ \mathrm{I}"] \& \mathcal{C}
   \arrow[Rightarrow, from=U, to=D, start anchor ={[yshift =3pt]}, end anchor = {[yshift=-3.8pt]}, "\omega_{\mathcal{C}}\bullet \mathrm{I}"]
  \end{tikzcd}$.
  \item 
 The bicategorical colimit $\mathbf{W}\star \mathbf{F}$, known as the coinverter of $\mathbf{F}$, is the localization $\mathcal{C}[S^{-1}]$. 
\end{itemize}
  
  We now describe its universal property, using the description of bicategorical colimits above (or e.g. the description of coinverters in \cite[Section~6.6]{La}).
  Given a category $\mathcal{D}$, let $\mathbf{Cat}(\mathcal{C},\mathcal{D})^{S}$ be the full subcategory of $\mathbf{Cat}(\mathcal{C},\mathcal{D})$, an object of which is a functor $\mathrm{F}$ such that $\mathrm{F} \bullet \omega_{\mathcal{C}} \bullet \mathrm{I}$ is a natural isomorphism. Equivalently, $\mathrm{F}(s)$ is an isomorphism in $\mathcal{D}$, for any $s \in S$. The universal property is given by an equivalence of $2$-functors:
  \[
   \mathbf{Cat}(\mathcal{C}[S^{-1}],-) \xrightarrow{\euler{Q}} \mathbf{Cat}(\mathcal{C},-)^{S}.
  \]
  Using the bicategorical Yoneda lemma, we may also obtain the localization functor $\mathrm{Q} \in \mathbf{Cat}(\mathcal{C},\mathcal{C}[S^{-1}])$, which satisfies $\euler{Q} = \mathbf{Cat}(\mathrm{Q},-)$.

  Given a $\Bbbk$-linear, additive, idempotent split category $\mathcal{C}$ and a small category $\mathcal{I}$, the functor category $\mathbf{Cat}(\mathcal{I},\mathcal{C})$ also is $\Bbbk$-linear, additive and idempotent split. In fact, we have the canonical isomorphism $\mathbf{Cat}(\mathcal{I},\mathcal{C}) \simeq \mathbf{Cat}_{\Bbbk}(\Bbbk\mathcal{I},\mathcal{C})$, where $\Bbbk \mathcal{I}$ is the free $\Bbbk$-linear category on $\mathcal{I}$.
  We thus obtain a $\Bbbk$-linear $2$-functor 
  \[
  \mathbf{Cat}(\mathcal{I},-): \mathbf{Cat}_{\Bbbk}^{\euler{D}} \rightarrow \mathbf{Cat}_{\Bbbk}^{\euler{D}}.
  \]
  Consider again the particular case $\mathcal{I} = \euler{2}$. The functors $\on{dom},\on{cod}$ induce $\Bbbk$-linear $2$-transformations $\on{Dom}, \on{Cod}: \mathbf{Cat}(2,-) \rightarrow \mathbb{1}_{\mathbf{Cat}_{\Bbbk}^{\euler{D}}}$ and the natural transformation $\omega$ gives a modification $\euler{w}: \on{Dom} \rightarrow \on{Cod}$. 
  
  Let $\mathbf{M}: \csym{C} \rightarrow \mathbf{Cat}_{\Bbbk}^{\euler{D}}$ be a birepresentation of $\csym{C}$ and let $\mathbf{M}^{\rightarrow} := \mathbf{Cat}(\euler{2},-) \circ \mathbf{M}$.
  
  \begin{definition}
    A tuple $\mathcal{S} = (\mathcal{S}(\mathtt{i}))_{\mathtt{i} \in \on{Ob}\ccf{C}}$, where $\mathcal{S}(\mathtt{i})$ is a collection of morphisms of $\mathbf{M}(\mathtt{i})$, is said to be a {\it $\csym{C}$-stable collection $\mathcal{S}$ in $\mathbf{M}$} if, for any $\mathtt{i,j} \in \on{Ob}\csym{C}$ and any $\mathrm{F} \in \csym{C}(\mathtt{i,j})$, we have
   \[
    \mathbf{M}\mathrm{F}\left( \mathcal{S}(\mathtt{i}) \right) \subseteq \mathcal{S}(\mathtt{j}).
   \]
   We say that $\mathcal{S}$ is {\it multiplicative} if $\mathcal{S}(\mathtt{i})$ is a subcategory of $\mathbf{M}(\mathtt{i})$, for all $\mathtt{i} \in \on{Ob}\csym{C}$.
  \end{definition}

   As an immediate consequence of the definition, there is a canonical correspondence between locally full subbirepresentations of $\mathbf{M}^{\rightarrow}$ and $\csym{C}$-stable collections in $\mathbf{M}$. Indeed, a locally full subbirepresentation $\mathbf{K}$ of a birepresentation $\mathbf{N}$ of $\csym{C}$ is uniquely determined by a tuple $(\mathcal{K}(\mathtt{i}))_{\mathtt{i} \in \on{Ob}\ccf{C}}$ of collections of objects of $\mathbf{N}(\mathtt{i})$ such that $\mathbf{N}\mathrm{F}(\mathcal{K}(\mathtt{i})) \subseteq \mathcal{K}(\mathtt{j})$.

   Let $\mathbf{S} \xrightarrow{\euler{I}} \mathbf{M}^{\rightarrow}$ be a locally full subbirepresentation of $\mathbf{M}^{\rightarrow}$, and let $\mathcal{S}$ be its corresponding $\csym{C}$-stable collection.
  
   Consider again the $2$-category $\csym{I}$ which we used in the above definition of localization of categories. Let $\Bbbk\!\csym{I}$ be the free $\Bbbk$-linear $2$-category on $\csym{I}$ (its objects and $1$-morphisms coincide with those of $\csym{I}$, and its spaces of $2$-morphisms are linearizations of the sets of $2$-morphisms in $\csym{I}$). A $\Bbbk$-linear pseudofunctor from $\Bbbk\!\csym{I}$ to a $\Bbbk$-linear bicategory $\csym{D}$ can be canonically identified with an ordinary pseudofunctor from $\csym{I}$ to (the underlying bicategory of) $\csym{D}$. Hence, if $\csym{D}$ is a $\Bbbk$-linear $2$-category, then a $\Bbbk$-linear $2$-functor $\Bbbk\!\csym{I} \rightarrow \csym{D}$ is given by a diagram in $\csym{D}$ of shape $\csym{I}$.
   
   In particular, we have the diagram 
  $\begin{tikzcd}[ampersand replacement = \&, sep = huge]
   \mathbf{Cat}(\euler{2},-) \arrow[r, shift left=6pt, ""{name=U, below}, "\on{Dom}"] \arrow[r, shift right=6pt, swap, ""{name=D, above}, "\on{Cod}"] \& \mathbb{1}_{\mathbf{Cat}_{\Bbbk}^{\euler{D}}}
   \arrow[Rightarrow, from=U, to=D, start anchor ={[yshift =3pt]}, end anchor = {[yshift=-3.8pt]}, "\euler{w}"]
  \end{tikzcd}$
  which we may precompose with $\mathbf{M}$ to obtain the diagram
  $\begin{tikzcd}[ampersand replacement = \&, sep = huge]
   \mathbf{M}^{\rightarrow} \arrow[r, shift left=6pt, ""{name=U, below}, "\on{Dom}_{\mathbf{M}}"] \arrow[r, shift right=6pt, swap, ""{name=D, above}, "\on{Cod}_{\mathbf{M}}"] \& \mathbf{M},
   \arrow[Rightarrow, from=U, to=D, start anchor ={[yshift =3pt]}, end anchor = {[yshift=-3.8pt]}, "\euler{w}_{\mathbf{M}}"]
  \end{tikzcd}$ from which we form the diagram
  \begin{equation}\label{CoinverterDiagram}
  \begin{tikzcd}[ampersand replacement = \&, sep = huge]
   \mathbf{S} \arrow[r, shift left=6pt, ""{name=U, below}, "\on{Dom}_{\mathbf{M}} \circ \euler{I}"] \arrow[r, shift right=6pt, swap, ""{name=D, above}, "\on{Cod}_{\mathbf{M}} \circ \euler{I}"] \& \mathbf{M}.
   \arrow[Rightarrow, from=U, to=D, start anchor ={[yshift =3pt]}, end anchor = {[yshift=-3.8pt]}, "\euler{w}_{\mathbf{M}} \bullet \euler{I}"]
  \end{tikzcd}
  \end{equation}
 
  \begin{definition}
   Let $\mathbf{M}$ be a birepresentation of $\csym{C}$, let $\mathcal{S}$ be a multiplicative $\csym{C}$-stable collection in $\mathbf{M}$ and let $\mathbf{S}$ be its corresponding locally full subbirepresentation of $\mathbf{M}^{\rightarrow}$. 
  We define the \emph{localization} $\mathbf{M} \rightarrow \mathbf{M}[\mathcal{S}^{-1}]$ of $\mathbf{M}$ by $\mathcal{S}$ as the coinverter of Diagram \ref{CoinverterDiagram} in $[\csym{C},\mathbf{Cat}_{\Bbbk}^{\euler{D}}]$.
  \end{definition}
  Since bicategorical colimits in $[\csym{C},\mathbf{Cat}_{\Bbbk}^{\euler{D}}]$ are constructed pointwise in $\mathbf{Cat}_{\Bbbk}^{\euler{D}}$, we conclude that, for every $\mathtt{i} \in \on{Ob}\csym{C}$, we have $\mathbf{M}[\mathcal{S}^{-1}](\mathtt{i}) \simeq \mathbf{M}(\mathtt{i})[\mathcal{S}(\mathtt{i})^{-1}]$.
  
  Reading off the universal property from \eqref{ColimitEquation}, we find the following:
  \begin{proposition}
  For a birepresentation $\mathbf{N}$, the category $[\csym{C},\mathbf{Cat}_{\Bbbk}^{\euler{D}}](\mathbf{M}[\mathcal{S}^{-1}],\mathbf{N})$ is equivalent to the full subcategory of $[\csym{C},\mathbf{Cat}_{\Bbbk}^{\euler{D}}](\mathbf{M},\mathbf{N})$ whose objects are $\Bbbk$-linear strong transformations $\Theta$ such that $\Theta \bullet \euler{w}_{\mathbf{M}} \bullet \euler{I}$ is an invertible modification. 
  
  Equivalently, $\Theta$ is a strong transformation such that $\Theta_{\mathtt{i}}(s)$ is invertible, for any $\mathtt{i} \in \on{Ob}\csym{C}$ and any $s \in \mathcal{S}(\mathtt{i})$. 
  
  By Yoneda lemma for bicategories, the components  $\Upsilon_{\mathtt{i}}:\mathbf{M}(\mathtt{i}) \rightarrow \mathbf{M}(\mathtt{i})[\mathcal{S}(\mathtt{i})^{-1}]$ of the localization transformation $\Upsilon: \mathbf{M} \rightarrow \mathbf{M}[\mathcal{S}^{-1}]$ are given by the indicated localization functors.
  \end{proposition}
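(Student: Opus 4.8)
The statement amounts to reading off the universal property of the coinverter from \eqref{ColimitEquation}, so the plan is to unwind the relevant definitions carefully. By construction, $\mathbf{M}[\mathcal{S}^{-1}]$ is the bicategorical colimit $\mathbf{W}\star\mathbf{F}$ in $\csym{B} = [\csym{C},\mathbf{Cat}_{\Bbbk}^{\euler{D}}]$, where $\csym{I}$ is the shape $2$-category used to define coinverters, $\mathbf{W}\colon \csym{I}^{\on{op}}\to\mathbf{Cat}_{\Bbbk}$ the corresponding weight, and $\mathbf{F}\colon \Bbbk\!\csym{I}\to\csym{B}$ the diagram \eqref{CoinverterDiagram}; existence is guaranteed since $\csym{B} = [\csym{C},\mathbf{Cat}_{\Bbbk}^{\euler{D}}]$, by the results of \cite[Section~3]{St} recalled above. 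Substituting these data into \eqref{ColimitEquation}, the left-hand side becomes precisely $[\csym{C},\mathbf{Cat}_{\Bbbk}^{\euler{D}}](\mathbf{M}[\mathcal{S}^{-1}],\mathbf{N})$, so it remains to identify the right-hand side $[\csym{I}^{\on{op}},\mathbf{Cat}_{\Bbbk}](\mathbf{W},[\csym{C},\mathbf{Cat}_{\Bbbk}^{\euler{D}}](\mathbf{F}-,\mathbf{N}))$, strongly naturally in $\mathbf{N}$.

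First I would compute that category of strong transformations. The diagram $\mathbf{G} := [\csym{C},\mathbf{Cat}_{\Bbbk}^{\euler{D}}](\mathbf{F}-,\mathbf{N})\colon \csym{I}^{\on{op}}\to\mathbf{Cat}_{\Bbbk}$ sends one object of $\csym{I}$ to $[\csym{C},\mathbf{Cat}_{\Bbbk}^{\euler{D}}](\mathbf{M},\mathbf{N})$ and the other to $[\csym{C},\mathbf{Cat}_{\Bbbk}^{\euler{D}}](\mathbf{S},\mathbf{N})$, with the two structure functors given by precomposition with $\on{Dom}_{\mathbf{M}}\circ\euler{I}$ and $\on{Cod}_{\mathbf{M}}\circ\euler{I}$, and the structure $2$-cell given by whiskering with $\euler{w}_{\mathbf{M}}\bullet\euler{I}$. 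Since $\mathbf{W}$ sends the first object to the terminal category and the second to the walking isomorphism, unwinding the definition of a strong transformation $\mathbf{W}\to\mathbf{G}$ shows that such a transformation amounts to the choice of an object $\Theta$ of $[\csym{C},\mathbf{Cat}_{\Bbbk}^{\euler{D}}](\mathbf{M},\mathbf{N})$ for which the whiskered modification $\Theta\bullet\euler{w}_{\mathbf{M}}\bullet\euler{I}$ — precisely the image under $\Theta$ of the walking isomorphism of $\mathbf{W}$ — is invertible; and a modification between two such transformations restricts to an arbitrary morphism $\Theta\to\Theta'$ in $[\csym{C},\mathbf{Cat}_{\Bbbk}^{\euler{D}}](\mathbf{M},\mathbf{N})$, its remaining components being forced by the modification axioms. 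This identifies the right-hand side with the full subcategory described in the statement and proves the first assertion. (One could alternatively invoke the explicit description of coinverters from \cite[Section~6.6]{La}.)

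Next I would verify the pointwise reformulation by tracing through what $\euler{w}_{\mathbf{M}}$ and $\euler{I}$ are. The modification $\euler{w}$ is induced by the natural transformation $\omega$ on $\euler{2}$, so after precomposition with $\mathbf{M}$ its component at $\mathtt{i}\in\on{Ob}\csym{C}$ is the natural transformation $\on{Dom}_{\mathbf{M}(\mathtt{i})}\Rightarrow\on{Cod}_{\mathbf{M}(\mathtt{i})}$ of functors $\mathbf{M}(\mathtt{i})^{\rightarrow}\to\mathbf{M}(\mathtt{i})$ whose value at a morphism $f$ of $\mathbf{M}(\mathtt{i})$, regarded as an object of $\mathbf{M}(\mathtt{i})^{\rightarrow}$, is $f$ itself, under the identification $\mathbf{M}^{\rightarrow}(\mathtt{i})\simeq\mathbf{M}(\mathtt{i})^{\rightarrow}$. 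Restricting along $\euler{I}$, whose $\mathtt{i}$-component is the inclusion of the full subcategory of $\mathbf{M}(\mathtt{i})^{\rightarrow}$ on $\mathcal{S}(\mathtt{i})$, and whiskering by $\Theta$, the $\mathtt{i}$-component of $\Theta\bullet\euler{w}_{\mathbf{M}}\bullet\euler{I}$ is the natural transformation with component $\Theta_{\mathtt{i}}(s)$ at $s\in\mathcal{S}(\mathtt{i})$. A modification of strong transformations is invertible precisely when all of its components are, so $\Theta\bullet\euler{w}_{\mathbf{M}}\bullet\euler{I}$ is invertible precisely when $\Theta_{\mathtt{i}}(s)$ is an isomorphism in $\mathbf{N}(\mathtt{i})$ for every $\mathtt{i}\in\on{Ob}\csym{C}$ and every $s\in\mathcal{S}(\mathtt{i})$.

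Finally, for the statement about $\Upsilon$: the equivalence established above is strongly natural in $\mathbf{N}$, hence is an equivalence between the corepresentable $2$-functor $[\csym{C},\mathbf{Cat}_{\Bbbk}^{\euler{D}}](\mathbf{M}[\mathcal{S}^{-1}],-)$ and the sub-$2$-functor of $[\csym{C},\mathbf{Cat}_{\Bbbk}^{\euler{D}}](\mathbf{M},-)$ cut out by the above invertibility condition. By the bicategorical Yoneda lemma it is therefore given, up to invertible modification, by precomposition with a strong transformation $\Upsilon\colon \mathbf{M}\to\mathbf{M}[\mathcal{S}^{-1}]$ — the transformation corresponding to $\id_{\mathbf{M}[\mathcal{S}^{-1}]}$. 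Since bicategorical colimits in $[\csym{C},\mathbf{Cat}_{\Bbbk}^{\euler{D}}]$ are computed pointwise, for each $\mathtt{i}\in\on{Ob}\csym{C}$ we have $\mathbf{M}[\mathcal{S}^{-1}](\mathtt{i})\simeq\mathbf{M}(\mathtt{i})[\mathcal{S}(\mathtt{i})^{-1}]$, realized as the coinverter of the $\mathtt{i}$-component of \eqref{CoinverterDiagram} in $\mathbf{Cat}_{\Bbbk}^{\euler{D}}$; tracing $\id_{\mathbf{M}[\mathcal{S}^{-1}]}$ through this pointwise universal property identifies $\Upsilon_{\mathtt{i}}$ with the localization functor $\mathbf{M}(\mathtt{i})\to\mathbf{M}(\mathtt{i})[\mathcal{S}(\mathtt{i})^{-1}]$. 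No step here is genuinely hard; the only part that requires care is this last matching of the pointwise coinverter data with the global universal property — keeping the bicategorical coherences straight, together with the identifications $\mathbf{M}^{\rightarrow}(\mathtt{i})\simeq\mathbf{M}(\mathtt{i})^{\rightarrow}$ and the passage between $\euler{2}$ and its free $\Bbbk$-linearization — which is bookkeeping rather than a real obstacle.
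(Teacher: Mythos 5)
Your proposal is correct and is essentially the paper's own (implicit) argument made explicit: the paper states the proposition as a direct consequence of reading the universal property off \eqref{ColimitEquation}, and your unwinding of the weight $\mathbf{W}$, the diagram $\mathbf{F}$, and the pointwise identification of $\euler{w}_{\mathbf{M}}\bullet\euler{I}$ is precisely the bookkeeping the authors are gesturing at.
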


    \begin{example}\label{LocalizeA2}
  Consider the quiver $A_{2}: 1 \xrightarrow{a} 2$. Let $\Bbbk A_{2}$ be the free $\Bbbk$-linear category on $A_{2}$. The $\Bbbk$-linear localization of $\Bbbk A_{2}$ by the morphism $a$ is the free $\Bbbk$-linear category $\Bbbk \widehat{A}_{2}$ on the category $\widehat{A}_{2}$, which admits the following presentation:
  \[
   \widehat{A}_{2} =
   \Big[\begin{tikzcd}
    1 \arrow[r, bend left, "a"] & 2 \arrow[l, bend left, "a^{-1}"]
   \end{tikzcd}\Big] / \left\langle a^{-1}\circ a = \on{id}_{1}, \; a\circ a^{-1} = \on{id}_{2} \right\rangle.
  \]
  Indeed, it is easy to verify that if $\Bbbk A_{2} \xrightarrow{\mathrm{F}} \mathcal{D}$ is a functor such that $\mathrm{F}(a)$ is invertible, then we may uniquely extend $\mathrm{F}$ to $\widehat{\mathrm{F}}: \Bbbk \widehat{A}_{2} \rightarrow \mathcal{D}$ by setting  $\widehat{\mathrm{F}}(a^{-1}) = \mathrm{F}(a)^{-1}$. It is also clear that $\Bbbk\widehat{A}_{2}$ is equivalent to $\Bbbk A_{1}$, where $A_{1}$ is the quiver with a unique vertex and no arrows.

  Hence, 
  \[
 (\Bbbk A_{2}\!\on{-proj})[{\setj{a}^{-1}}] \simeq (\Bbbk A_{2})^{\euler{D}}[\setj{a}^{-1}] \simeq (\Bbbk A_{2})[\setj{a}^{-1}]^{\euler{D}} \simeq (\Bbbk A_{1})^{\euler{D}} \simeq \mathbf{vect}_{\Bbbk}.
  \]
  Similarly, we may extend the above argument to the case of a quiver of the form
  \[
   \begin{tikzcd}[sep = small]
    1 \arrow[d, "s_{1}"] & 2 \arrow[d, "s_{2}"] & \cdots & m \arrow[d, "s_{m}"] \\
    1' & 2' & \cdots & m'
   \end{tikzcd}.
  \]
  
  For $I \subseteq \setj{1,\ldots,m}$, localizing the category of projectives over the path algebra of this quiver by $\setj{s_{i} \; | \; i \in I}$ gives the category of projectives over the path algebra of the quiver obtained by contracting the arrows $\setj{s_{i} \; | \; i \in I}$ and replacing the connected component $i \xrightarrow{s_{i}} i'$ by a single vertex $s_{i}$.
 \end{example}

\subsection{Localization and finitary birepresentations}
We now give two properties of localization of birepresentations which are particularly relevant for the study of finitary birepresentations.

\begin{proposition}\label{FinitaryLocalization}
 Let $\mathbf{M}$ be a finitary birepresentation of $\csym{C}$ and let $\mathcal{S}$ be a $\csym{C}$-stable collection in $\mathbf{M}$. If $\mathbf{M}$ is simple transitive, then so is $\mathbf{M}[\mathcal{S}^{-1}]$.
\end{proposition}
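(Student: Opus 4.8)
The plan is to understand an arbitrary $\csym{C}$-stable ideal of $\mathbf{M}[\mathcal{S}^{-1}]$ by pulling it back along the localization transformation. Write $\mathbf{N} := \mathbf{M}[\mathcal{S}^{-1}]$ and let $\Upsilon : \mathbf{M} \rightarrow \mathbf{N}$ be the localization transformation, whose components are the localization functors $\Upsilon_{\mathtt{i}} : \mathbf{M}(\mathtt{i}) \rightarrow \mathbf{M}(\mathtt{i})[\mathcal{S}(\mathtt{i})^{-1}]$, weighted colimits in $[\csym{C},\mathbf{Cat}_{\Bbbk}^{\euler{D}}]$ being computed pointwise. Being a strong transformation, $\Upsilon$ supplies for every $\mathrm{F} \in \csym{C}(\mathtt{i},\mathtt{j})$ an invertible natural transformation $\gamma_{\mathrm{F}}$ relating the functors $\Upsilon_{\mathtt{j}} \circ \mathbf{M}\mathrm{F}$ and $\mathbf{N}\mathrm{F} \circ \Upsilon_{\mathtt{i}}$. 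I will also use that, by the explicit construction of the localization recalled in Section~\ref{s3} as an additive-and-Karoubi completion of an identity-on-objects $\Bbbk$-linear localization, each $\Upsilon_{\mathtt{i}}$ is \emph{additively dense}: every object of $\mathbf{N}(\mathtt{i})$ is a direct summand of a finite direct sum of objects $\Upsilon_{\mathtt{i}}(X)$ with $X \in \mathbf{M}(\mathtt{i})$. (Along the way one should also record that $\mathbf{N}$ is again finitary, so that simple transitivity is meaningful for it; this follows from the pointwise description together with additive density.)

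Now let $\mathcal{I} = (\mathcal{I}(\mathtt{i}))_{\mathtt{i}}$ be a $\csym{C}$-stable ideal of $\mathbf{N}$, and put $\mathcal{J}(\mathtt{i}) := \Upsilon_{\mathtt{i}}^{-1}(\mathcal{I}(\mathtt{i}))$. One checks that $\mathcal{J}$ is a $\csym{C}$-stable ideal of $\mathbf{M}$: the preimage of an ideal under a $\Bbbk$-linear functor is an ideal, and $\csym{C}$-stability of $\mathcal{I}$ transfers to $\mathcal{J}$ by chasing the naturality square of $\gamma_{\mathrm{F}}$ at a morphism of $\mathcal{J}(\mathtt{i})$ and using that ideals are closed under composition with the (invertible) components of $\gamma_{\mathrm{F}}$. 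Since $\mathbf{M}$ is simple transitive, $\mathcal{J}$ is trivial: either $\mathcal{J}(\mathtt{i}) = 0$ for all $\mathtt{i}$, or $\mathcal{J}(\mathtt{i})$ is the ideal of all morphisms of $\mathbf{M}(\mathtt{i})$ for all $\mathtt{i}$. I treat the two cases separately. In the second case $\mathrm{id}_{X} \in \mathcal{J}(\mathtt{i})$ for every $X$, hence $\mathrm{id}_{\Upsilon_{\mathtt{i}}(X)} = \Upsilon_{\mathtt{i}}(\mathrm{id}_{X}) \in \mathcal{I}(\mathtt{i})$; since $\mathcal{I}(\mathtt{i})$ is an ideal, hence closed under biproduct inclusions and projections and under addition, additive density forces $\mathrm{id}_{Y} \in \mathcal{I}(\mathtt{i})$ for every object $Y$ of $\mathbf{N}(\mathtt{i})$, so $\mathcal{I}(\mathtt{i})$ is the ideal of all morphisms, for every $\mathtt{i}$; thus $\mathcal{I}$ is trivial.

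The case $\mathcal{J} = 0$ is the heart of the matter, and is where I expect the main obstacle. Here $\Upsilon_{\mathtt{i}}^{-1}(\mathcal{I}(\mathtt{i})) = 0$, so each $\Upsilon_{\mathtt{i}}$ is faithful and $\mathcal{I}(\mathtt{i})$ meets the image of $\Upsilon_{\mathtt{i}}$ only in $0$; the goal is to deduce $\mathcal{I} = 0$. Suppose $0 \neq g \in \mathcal{I}(\mathtt{i}_{0})$. Using additive density and the ideal property of $\mathcal{I}(\mathtt{i}_{0})$, we may pre- and post-compose $g$ with split projections onto, and inclusions of, suitable direct summands so as to assume, without losing $g \neq 0$, that $g : \Upsilon_{\mathtt{i}_{0}}(X) \rightarrow \Upsilon_{\mathtt{i}_{0}}(X')$ with $X,X'$ indecomposable in $\mathbf{M}(\mathtt{i}_{0})$. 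The crux is then to produce, by composing $g$ with suitable further morphisms of $\mathbf{N}(\mathtt{i}_{0})$, a nonzero morphism lying in the image of $\Upsilon_{\mathtt{i}_{0}}$: since $\Upsilon_{\mathtt{i}_{0}}$ is faithful, such a morphism would come from a nonzero morphism of $\mathbf{M}(\mathtt{i}_{0})$ lying in $\mathcal{J}(\mathtt{i}_{0})$, contradicting $\mathcal{J} = 0$ and finishing the proof. Concretely, I would write $g$ as a $\Bbbk$-linear combination of zigzags whose backward arrows lie in $\mathcal{S}(\mathtt{i}_{0})$ and then, using the $\csym{C}$-stability of the collection $\mathcal{S}$ (which guarantees enough maps are available in the relevant $\mathcal{S}(\mathtt{j})$) together with the coherence isomorphisms $\gamma_{\mathrm{F}}$, compose so as to cancel the formal inverses. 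The delicate point — and the step I expect to require the most care — is keeping control of the morphism spaces of $\mathbf{M}(\mathtt{i}_{0})[\mathcal{S}(\mathtt{i}_{0})^{-1}]$, whose morphisms are in general not all of the form $\Upsilon_{\mathtt{i}_{0}}(-)$, and showing that this ``clearing of denominators'' does not produce the zero morphism.
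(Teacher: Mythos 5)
Your overall strategy coincides with the paper's: both arguments transport the ideal $\mathcal{I}$ of $\mathbf{N} := \mathbf{M}[\mathcal{S}^{-1}]$ back to a $\csym{C}$-stable ideal of $\mathbf{M}$ (you take $\mathcal{J} = \Upsilon^{-1}(\mathcal{I})$; the paper considers $\on{Ker}\overline{\pi}$ for $\overline{\pi} = \pi \circ \Upsilon$, which is the same ideal), and then apply simple transitivity of $\mathbf{M}$ to get a dichotomy. Your handling of the ``whole ideal'' case, via additive density of $\Upsilon_{\mathtt{i}}$, is fine.

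However, the case $\mathcal{J} = 0$ — which you yourself flag as ``the heart of the matter'' and ``the step I expect to require the most care'' — is left unresolved, and the zigzag strategy you sketch is not the right tool. Trying to ``clear denominators'' in $\mathbf{M}(\mathtt{i}_{0})[\mathcal{S}(\mathtt{i}_{0})^{-1}]$ by direct manipulation of fractions does not obviously terminate in a nonzero morphism in the image of $\Upsilon_{\mathtt{i}_{0}}$, and you correctly sense the danger of everything collapsing to zero. This is a genuine gap: you have correctly deduced that $\Upsilon_{\mathtt{i}}$ is faithful in this case, but you have not extracted the needed consequence.

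The missing idea is a balancedness argument that short-circuits the fraction calculus entirely. Since $\mathbf{M}$ is finitary, each $\mathbf{M}(\mathtt{i})$ is a Krull--Schmidt category over a field, hence \emph{balanced} (a morphism that is both mono and epi is an isomorphism). A faithful functor out of any category reflects monomorphisms and epimorphisms; a faithful functor out of a balanced category therefore reflects isomorphisms, i.e.\ is conservative. Applied to the faithful $\Upsilon_{\mathtt{i}}$, this says: every $s \in \mathcal{S}(\mathtt{i})$, being sent to an isomorphism by $\Upsilon_{\mathtt{i}}$, was already an isomorphism in $\mathbf{M}(\mathtt{i})$. Hence the localization is by a class of isomorphisms, $\Upsilon$ is an equivalence, and from $\Upsilon^{-1}(\mathcal{I}) = 0$ you immediately get $\mathcal{I} = 0$. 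Inserting this reasoning in place of your zigzag paragraph completes the proof and recovers exactly the paper's argument; note this is the one place the finitariness hypothesis (or, more generally, balancedness of the target categories) is genuinely used.
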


\begin{proof}
 Let $\mathcal{I}$ be an ideal in $\mathbf{M}[\mathcal{S}^{-1}]$, and consider the canonical strong transformation $\mathbf{M}[\mathcal{S}^{-1}] \xrightarrow{\pi} \mathbf{M}[\mathcal{S}^{-1}]/\mathcal{I}$. There is a strong transformation $\overline{\pi}: \mathbf{M} \rightarrow \mathbf{M}[\mathcal{S}^{-1}]/\mathcal{I}$ which sends $\mathcal{S}$ to isomorphisms and makes the following diagram commute up to invertible modification:
 \[
  \begin{tikzcd}
   \mathbf{M} \arrow[r, "\euler{Q}"] \arrow[dr, swap, "\overline{\pi}"] & \mathbf{M}[\mathcal{S}^{-1}] \arrow[d, "\pi"] \\
   & \mathbf{M}[\mathcal{S}^{-1}]/\mathcal{I}
  \end{tikzcd}
 \]
 Consider the ideal $\on{Ker}\overline{\pi}$ of $\mathbf{M}$. Since $\mathbf{M}$ is simple transitive, this ideal is zero or all of $\mathbf{M}$. 
 
 If $\on{Ker} \overline{\pi} = \mathbf{M}$, then $\overline{\pi} = 0$ and hence, since $\overline{\pi}$ determines $\pi$ up to invertible modification, we see that also $\pi = 0$, showing that $\mathcal{I}$ is all of $\mathbf{M}[\mathcal{S}^{-1}]$.
 
 If $\on{Ker}\overline{\pi} = 0$, then $\overline{\pi}$ is locally faithful. But $\overline{\pi}$ being locally faithful implies that also $\euler{Q}$ is locally faithful. However, since $\mathbf{M}$ is finitary, the category $\mathbf{M}(\mathtt{i})$ is balanced (mono and epi implies iso), for all $\mathtt{i} \in \on{Ob}\csym{C}$. A faithful functor from a balanced category reflects isomorphisms, since faithful functors reflect monomorphisms and epimorphisms. We thus see that $\euler{Q}$ is given by conservative functors, which implies that $\mathcal{S}$ consists of isomorphisms, and so $\mathbf{M} \simeq \mathbf{M}[\mathcal{S}^{-1}]$, proving that $\mathcal{I}$ is zero. Beyond the finitary case, the same argument holds whenever $\mathbf{M}(\mathtt{i})$ is balanced, for all $\mathtt{i} \in \on{Ob}\csym{C}$.
\end{proof}

Recall that a $\Bbbk$-linear category is finitary if it is hom-finite, additive, idempotent split and with finitely many isomorphism classes of indecomposable objects. We now show that once we have established hom-finiteness, the last condition follows automatically:

\begin{proposition}
 Let $\mathbf{M}$ be a finitary birepresentation of $\csym{C}$, and let $\mathcal{S}$ be a $\csym{C}$-stable collection in $\mathbf{M}$. If $\mathbf{M}[\mathcal{S}^{-1}]$ is locally hom-finite, then $\mathbf{M}[\mathcal{S}^{-1}]$ is a finitary birepresentation of $\csym{C}$.
\end{proposition}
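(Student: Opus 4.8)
The plan is to reduce the statement to one about each underlying category $\mathbf{M}[\mathcal{S}^{-1}](\mathtt{i})$ separately, using the pointwise description $\mathbf{M}[\mathcal{S}^{-1}](\mathtt{i}) \simeq \mathbf{M}(\mathtt{i})[\mathcal{S}(\mathtt{i})^{-1}]$ established just above. Since the latter is, by construction, a coinverter in $\mathbf{Cat}_{\Bbbk}^{\euler{D}}$, it is automatically $\Bbbk$-linear, additive and idempotent split; together with the hypothesis that $\mathbf{M}[\mathcal{S}^{-1}]$ is locally hom-finite, the only property left to verify is that $\mathbf{M}(\mathtt{i})[\mathcal{S}(\mathtt{i})^{-1}]$ has finitely many isomorphism classes of indecomposable objects, for every $\mathtt{i} \in \on{Ob}\csym{C}$. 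So fix such an $\mathtt{i}$, write $\mathcal{C} = \mathbf{M}(\mathtt{i})$, $S = \mathcal{S}(\mathtt{i})$, and let $\mathrm{Q}_{\mathtt{i}}\colon \mathcal{C} \to \mathcal{C}[S^{-1}]$ denote the localization functor, which is $\Bbbk$-linear and hence additive.

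The first key step is to observe that $\mathrm{Q}_{\mathtt{i}}$ is dense up to direct summands, i.e. every object of $\mathcal{C}[S^{-1}]$ is a direct summand of $\mathrm{Q}_{\mathtt{i}}(X)$ for some $X \in \on{Ob}\mathcal{C}$. Indeed, since the envelope $2$-functors preserve bicategorical colimits, $\mathcal{C}[S^{-1}]$ is obtained by applying $(-)^{\euler{D}} = (-)^{\on{Kar}} \circ (-)^{\oplus}$ to the $\Bbbk$-linear coinverter of the corresponding diagram computed in $\mathbf{Cat}_{\Bbbk}$; that coinverter is the ordinary $\Bbbk$-linear localization of $\mathcal{C}$ at $S$, whose structure functor is bijective on objects (as recalled in Section~\ref{s3}, cf. \cite{GZ}, and illustrated in Example~\ref{LocalizeA2}). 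The additive envelope only formally adjoins finite direct sums and the Karoubi envelope only formally adjoins images of idempotents, so every object of $\mathcal{C}[S^{-1}]$ is a summand of a finite direct sum of objects of the form $\mathrm{Q}_{\mathtt{i}}(X)$; by additivity of $\mathrm{Q}_{\mathtt{i}}$, such a finite direct sum is itself of this form.

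The second key step is Krull--Schmidt. As $\mathcal{C}$ is finitary, it has a finite list $X_{1},\ldots,X_{n}$ of indecomposable objects up to isomorphism, and additivity of $\mathrm{Q}_{\mathtt{i}}$ together with the previous step shows that every indecomposable object of $\mathcal{C}[S^{-1}]$ is a direct summand of $\mathrm{Q}_{\mathtt{i}}(X_{k})$ for some $k$. Now $\mathcal{C}[S^{-1}]$ is idempotent split and, by hypothesis, hom-finite, so $\on{End}_{\mathcal{C}[S^{-1}]}(\mathrm{Q}_{\mathtt{i}}(X_{k}))$ is a finite-dimensional $\Bbbk$-algebra and hence semiperfect; therefore each $\mathrm{Q}_{\mathtt{i}}(X_{k})$ decomposes into a finite direct sum of indecomposable objects with local endomorphism rings, uniquely up to isomorphism and reordering. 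Hence the isomorphism classes of indecomposable objects of $\mathcal{C}[S^{-1}]$ form a subset of the finite collection of indecomposable summands of $\mathrm{Q}_{\mathtt{i}}(X_{1}),\ldots,\mathrm{Q}_{\mathtt{i}}(X_{n})$, so there are finitely many of them. Thus $\mathbf{M}[\mathcal{S}^{-1}](\mathtt{i})$ is finitary for every $\mathtt{i}$, and consequently $\mathbf{M}[\mathcal{S}^{-1}]$ is a finitary birepresentation of $\csym{C}$.

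I expect the main obstacle to be the first key step: making rigorous the claim that the localization functor $\mathrm{Q}_{\mathtt{i}}$ is dense up to summands. This rests on unwinding the relation between the coinverter in $\mathbf{Cat}_{\Bbbk}^{\euler{D}}$, the coinverter in plain $\Bbbk$-linear categories, and the additive and Karoubi envelopes, and on recording the (classical but, in this $\Bbbk$-linear bicategorical packaging, slightly hidden) fact that the underlying $\Bbbk$-linear localization functor is essentially surjective. Once that is established, the remainder is a routine invocation of the Krull--Schmidt theorem in the presence of hom-finiteness and idempotent splitting.
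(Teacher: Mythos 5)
Your proof is correct and follows essentially the same route as the paper: reduce pointwise, observe that every object of $\mathbf{M}[\mathcal{S}^{-1}](\mathtt{i})$ lies in $\on{add}$ of the images of the finitely many indecomposables of $\mathbf{M}(\mathtt{i})$, and conclude by Krull--Schmidt in the hom-finite idempotent-split setting. The only difference is cosmetic: the paper asserts that $\euler{Q}$ is locally essentially surjective, whereas you more carefully argue density up to summands after passing through the Karoubi envelope, which is in fact the sharper (and safer) statement to use.
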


\begin{proof}
 Under the assumption above, $\mathbf{M}[\mathcal{S}^{-1}](\mathtt{i})$ is hom-finite, additive and idempotent split, for every $\mathtt{i} \in \on{Ob}\csym{C}$. As a consequence, each such category is Krull-Schmidt. For each $\mathtt{i}$, fix a complete list of representatives $X_{1}^{\mathtt{i}},\ldots, X_{m(\mathtt{i})}^{\mathtt{i}}$ of the isomorphism classes of indecomposable objects in $\mathbf{M}(\mathtt{i})$. Since $\mathbf{M} \xrightarrow{\euler{Q}} \mathbf{M}[\mathcal{S}^{-1}]$ is locally essentially surjective, we see that 
 \[
  \mathbf{M}[\mathcal{S}^{-1}](\mathtt{i}) = \on{add}\setj{ \euler{Q}_{\mathtt{i}}(X_{j}^{\mathtt{i}}) \; | \; j = 1,\ldots,m(\mathtt{i})}.
 \]
 Since this category is Krull-Schmidt, we can decompose each of the objects $\euler{Q}_{\mathtt{i}}(X_{j}^{\mathtt{i}})$ into a direct sum of finitely many indecomposable objects. We may thus write
 \[
  \euler{Q}_{\mathtt{i}}(X_{j}^{\mathtt{i}}) = \bigoplus_{k=1}^{n(j)} Y_{j,k}^{\mathtt{i}},
 \]
 and so it follows that
 \[
    \mathbf{M}[\mathcal{S}^{-1}](\mathtt{i}) = \on{add}\setj{ Y_{j,k}^{\mathtt{i}} \; | \; k = 1,\ldots, n(j) \text{ and } j = 1,\ldots,m(\mathtt{i})}.
 \]
 From this we see that indeed there are only finitely many isomorphism classes of indecomposable objects in $\mathbf{M}[\mathcal{S}^{-1}](\mathtt{i})$ and the result follows.
\end{proof}

\section{\texorpdfstring{The bicategory of finite-dimensional $\Lambda_n$-$\Lambda_n$-bimodules and the main result}{The bicategory of finite dimensional A-A-bimodules and the main result}}\label{s4}
From now on we assume $\Bbbk$ to be an algebraically closed field of characteristic 0. 
Further, we assume all modules to be finite-dimensional.
Throughout we use the notions of $\Lambda$-$\Lambda$-bimodules and  left $\Lambda\otimes_\Bbbk \Lambda^\mathrm{op}$-modules interchangeably.

\subsection{\texorpdfstring{The algebra $\Lambda_n$}{The algebra An}}
Let $\Lambda_1$ be the path algebra of the quiver
\begin{displaymath}
\begin{tikzcd}
 1 \arrow[loop right, "\alpha", in=40, out = -20, swap, looseness=5]
\end{tikzcd}
\end{displaymath}
modulo the relation $\alpha^2=0$. Then $\Lambda_1$ is isomorphic to the algebra of dual numbers $D=\Bbbk [x]/(x^2)$.

For $n\geq 2$, let $Q_n$ be the  following quiver:
\[
 \begin{tikzcd}
  & & 1 \arrow[dll, swap, "\alpha_{1}"] \\
  2 \arrow[r, swap, "\alpha_{2}"] & 3 \arrow[r, swap, "\alpha_{3}"] & \cdots \arrow[r, swap, "\alpha_{n-2}"] & n-1 \arrow[r, swap, "\alpha_{n-1}"] & n \arrow[ull, swap, "\alpha_{n}"]
 \end{tikzcd}
\]
Let $\Lambda_n$ be the path algebra $\Bbbk Q_n$ modulo the ideal generated by the relations that composition of  any two arrows is 0.

We denote the orthogonal, primitive idempotents associated to the vertices of $Q_{n}$ by $\varepsilon_{1},\varepsilon_{2},\ldots, \varepsilon_{n}$.

Given a positive integer $n$, let $\mathscr{D}_{n}$ be the bicategory which has a unique object $\mathtt{i}$ such that $\mathscr{D}_n(\mathtt{i},\mathtt{i})=\Lambda_n\text{-mod-}\Lambda_n$, where the composition of $1$-morphisms is given by tensoring over $\Lambda_{n}$.

\subsection{\texorpdfstring{Indecomposable $\Lambda_n$-$\Lambda_n$-bimodules}{Indecomposable An-An-bimodules}}
We now give a brief summary of \cite[Sections~1-2]{Jo2}.

For each $n\geq 1$, the algebra $\Lambda_n\otimes_{\Bbbk} \Lambda_n^\mathrm{op}$ is special biserial in the sense of \cite{BR}. The isomorphism classes of indecomposable finite-dimensional modules over special biserial algebras were classified in \cite{BR},\cite{WW}. We use the notation from \cite{Jo2} (up to a small change of indexing, see Remark~\ref{rmk_index_nm}). 
The case $n=1$ requires slightly different notation, so we do not describe it in detail here but refer the reader to \cite{Jo3}. All statements still hold for $n=1$.

For $n\geq 2$, the algebra $\Lambda_n\otimes_\Bbbk \Lambda_n^\mathrm{op}$ is isomorphic to the path algebra of the discrete torus
\[
\begin{tikzcd}[sep = small]
 1|1 \arrow[d] & 1|2 \arrow[l] \arrow[d] & \cdots \arrow[l] & 1|n \arrow[l] \arrow[d] & 1|1 \arrow[l] \arrow[d] \\
 2|1 \arrow[d] & 2|2 \arrow[l] \arrow[d] & \cdots \arrow[l] & 2|n \arrow[l] \arrow[d] & 2|1 \arrow[l] \arrow[d] \\
 \vdots \arrow[d] & \vdots \arrow[d] & \ddots & \vdots \arrow[d] & \vdots \arrow[d] \\
 n|1 \arrow[d] & n|2 \arrow[l] \arrow[d] & \cdots \arrow[l] & n|n \arrow[l] \arrow[d] & n|1 \arrow[l] \arrow[d] \\
 1|1 & 1|2 \arrow[l] & \cdots \arrow[l] & 1|n \arrow[l] & 1|1 \arrow[l] \\
\end{tikzcd}
\]
where  we identify the first row with the last row, and the first column with the last column,
modulo the following relations:
\begin{itemize}
\item composition of any two horizontal arrows is 0;
\item composition of any two vertical arrows is 0;
\item all squares commute.
\end{itemize}
Vertical arrows are of the form $\alpha_i\otimes \varepsilon_j$, whereas horizontal arrows are of the form $\varepsilon_i\otimes \alpha^{\mathrm{op}}_j$.
Since we presented $\Lambda_{n} \otimes_{\Bbbk} \Lambda_{n}^{\on{op}}$ using a discrete torus, it is natural that in some arguments we write $\alpha_{k}, \varepsilon_{k}$ for $k > n$, in which case we set $\alpha_{k} := \alpha_{k'}$, for $k' \in \setj{1,\ldots,n}$ such that $k \equiv k' \on{mod} n$, and similarly for $\varepsilon$.

We describe $\Lambda_n$-$\Lambda_n$-bimodules diagrammatically as representation of the above quiver with relations.
For readability we only present the part of the quiver at which the value of a representation is non-zero.
All arrows in the diagrams indicate action via identity operators.

The isomorphism classes of indecomposable $\Lambda_n$-$\Lambda_n$-bimodules form three families: projective-injectives, string bimodules, and band bimodules. 

The band bimodules form a three-parameter family indexed by triples $(j, m, \lambda)$, where $j \in \setj{1,\ldots,n}$, $m$ is a positive integer and $\lambda \in \Bbbk\setminus\setj{0}$ is a non-zero scalar. Similarly to \cite{Jo3}, we do not need to consider band bimodules in our arguments, hence omit a detailed exposition (which can be found for instance in \cite{Jo2}). 

Projective-injectives: for each $i,j\in \{ 1,\ldots ,n\}$, there is an indecomposable bimodule $P_{i|j}=I_{i+1|j-1}$, with the following diagrammatic presentation:
\[
 \begin{tikzcd}
  \Bbbk_{i|j-1} \arrow[d, swap, "\alpha_{i}\cdot"] & \Bbbk_{i|j} \arrow[l, swap, "\cdot \alpha_{j-1}"] \arrow[d, "\alpha_{i} \cdot "] \\
  \Bbbk_{i+|j-1} & \Bbbk_{i+1|j} \arrow[l, "\cdot \alpha_{j-1}"]
 \end{tikzcd}
\]
String bimodules: for each $i,j\in \{1,\ldots ,n\}$ and all nonnegative integers $k$, there are four string bimodules $W_{i|j}^{(k)}$, $S_{i|j}^{(k)}$, $N_{i|j}^{(k)}$ and $M_{i|j}^{(k)}$. Below are examples of small dimensions to illustrate, for more details see \cite{Jo2}.

\[
\resizebox{0.95\textwidth}{!}{$
\begin{array}{ll}
W_{i|j}^{(2)}:\
\begin{tikzcd}[ampersand replacement = \&]
 \Bbbk_{i|j} \arrow[d, swap, "{\alpha_{i}\cdot}"] \\
 \Bbbk_{i+1|j} \& \Bbbk_{i+1|j+1} \arrow[l, swap, "{\cdot \alpha_{j}}"] \arrow[d, swap, "{\alpha_{i+1}\cdot}"] \\
 \& \Bbbk_{i+2|j+1} \& \Bbbk_{i+2| j+2} \arrow[l, swap, "{\cdot \alpha_{j+1}}"]
\end{tikzcd}
& N_{i|j}^{(1)}:\
\begin{tikzcd}[ampersand replacement = \&]
 \Bbbk_{i|j-1}  \& \Bbbk_{i|j} \arrow[d, swap, "{\alpha_{i} \cdot}"] \arrow[l, swap, "{\cdot \alpha_{j-1}}"] \\
 \& \Bbbk_{i+1|j} \& \Bbbk_{i+1|j+1} \arrow[l, swap, "{\cdot \alpha_{j}}"]
\end{tikzcd} \\
S_{i|j}^{(1)}:\
\begin{tikzcd}[ampersand replacement = \&]
 \Bbbk_{i|j} \arrow[d, swap, "\alpha_{i}\cdot "] \\ \Bbbk_{i+1|j} \& \Bbbk_{i+1|j+1} \arrow[l, swap, "\cdot \alpha_{j}"] \arrow[d, swap, "\alpha_{i+1} \cdot"] \\
 \& \Bbbk_{i+2|j+1}
\end{tikzcd}
& M_{i|j}^{(1)}:\
\begin{tikzcd}[ampersand replacement = \&]
 \Bbbk_{i|j-1} \& \Bbbk_{i|j} \arrow[l, swap, "\cdot \alpha_{j-1}"] \arrow[d, swap, "\alpha_{i} \cdot "] \\
 \& \Bbbk_{i+1|j} \& \Bbbk_{i+1|j+1} \arrow[l, swap, "\cdot \alpha_{j}"] \arrow[d, swap, "\alpha_{i+1} \cdot "] \\
 \& \& \Bbbk_{i+2|j+1}
\end{tikzcd}
\end{array}
$}
\]
The index $i|j$ is called the \emph{initial vertex}.  The index $k$ counts the number of \emph{valleys},  i.e. sinks of indegree 2.

\begin{rmk}\label{rmk_index_nm}
Compared to \cite{Jo2}, the indexing of the string bimodules of shapes $N$ and $M$ is shifted. The bimodules $N_{i|j}^{(k)}$ and $M_{i|j}^{(k)}$ would in \cite{Jo2} be called $N_{i|j-1}^{(k)}$ and $M_{i|j-1}^{(k)}$ respectively.
This change in notation gives easier formulas for tensor products in Subsection~\ref{subs_multtable}.
\end{rmk}

Following \cite{MMZ1}, we call an indecomposable $\Lambda_n$-$\Lambda_n$-bimodule \emph{$\Bbbk$-split} if it is of the form $U\otimes_\Bbbk V$, for indecomposable left and right $A$-modules $U$ and $V$. The $\Bbbk$-split $\Lambda_n$-$\Lambda_n$-bimodules are
\begin{itemize}
    \item the projective-injective bimodules $P_{i|j}$,
    \item the simple bimodules $L_{i|j}=W_{i|j}^{(0)}$,
    \item the 2-dimensional bimodules $S_{i|j}^{(0)}$ and $N_{i|j}^{(0)}$.
\end{itemize}

The two-sided cells in the set of isomorphism classes of indecomposable $\Lambda_n$-$\Lambda_n$-bimodules are the following:
\begin{itemize}
\item the cell $\mathcal{J}_{\text{split}}$ consisting of all $\Bbbk$-split bimodules,
\item the cell $\mathcal{J}_{M_0}$ consisting of all bimodules $M_{i|j}^{(0)}$ where $i,j\in \{ 1,\ldots ,n\}$,
\item for each positive integer $k$, the cell $\mathcal{J}_k$ consisting of all string bimodules with exactly $k$ valleys,
\item the cell $\mathcal{J}_{\text{band}}$ consisting of all band bimodules.
\end{itemize}
Moreover, the two-sided cells are linearly ordered as follows:
\begin{align*}
\mathcal{J}_{\text{split}} \geq_J \mathcal{J}_{M_0} \geq_J \mathcal{J}_{1} \geq_J \mathcal{J}_{2} \geq_J \ldots \geq_J \mathcal{J}_{\text{band}} .
\end{align*}
All two-sided cells except $\mathcal{J}_{M_0}$ are idempotent. Moreover, all cells are finite, apart from $\mathcal{J}_{\mathrm{band}}$, which has the same cardinality as the field $\Bbbk$.

\subsection{The main result}
From now on, we fix a positive integer $n$. The main goal of this paper is to prove the following result.

\begin{theorem}\label{thm_main}
Fix a  positive integer $k$. Then the following holds.
\begin{enumerate}[(i)]
\item Any simple transitive birepresentation of $\mathscr{D}_n$ with apex $\mathcal{J}_\mathrm{split}$ is equivalent to a cell birepresentation.
\item Any simple transitive birepresentation of $\mathscr{D}_n$ with apex $\mathcal{J}_k$ has rank between $n$ and $2n$.
\item For each $j=0,\ldots ,n$, there exist exactly $\binom{n}{j}$ pairwise non-equivalent simple transitive birepresentations of $\mathscr{D}_n$ with apex $\mathcal{J}_k$ which have rank $n+j$. Every such birepresentation can be constructed by localizing a cell birepresentation with apex $\mathcal{J}_{k}$ by a suitable $\csym{D}_{n}$-stable collection.
\end{enumerate}
\end{theorem}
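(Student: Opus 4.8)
The proof splits according to the two possible finitary apexes. First I would observe that the cells $\mathcal{J}_{M_0}$ and $\mathcal{J}_{\mathrm{band}}$ cannot serve as the apex of a finitary birepresentation --- the former is not idempotent, the latter not finitary --- so parts (i) and (ii)--(iii) between them cover all cases. For part (i), the plan is to reduce to an existing classification. Since $\mathcal{J}_{\mathrm{split}}$ is the maximal two-sided cell, a simple transitive birepresentation $\mathbf{M}$ with apex $\mathcal{J}_{\mathrm{split}}$ is controlled by its restriction to the sub-bicategory generated by the identity $1$-morphisms and the projective-injective bimodules $P_{i|j}$; as $\bigoplus_{i,j} P_{i|j} \cong \Lambda_n \otimes_\Bbbk \Lambda_n$, this sub-bicategory is biequivalent to $\csym{C}_{\Lambda_n}$. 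I would then check that the restriction is again simple transitive with the same apex, apply \cite[Theorem~12]{MMZ2} (or \cite[Theorem~15]{MM5} when $n=1$, as $\Lambda_1$ is self-injective) to identify the restriction as a cell birepresentation of $\csym{C}_{\Lambda_n}$, and lift this back to conclude that $\mathbf{M}$ is a cell birepresentation of $\mathscr{D}_n$.

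The substance is (ii)--(iii), which I would prove in three moves. \emph{Move~1} (carried out in Section~\ref{s5}): determine all possible tuples of action matrices of a simple transitive $\mathbf{M}$ with apex $\mathcal{J}_k$. The input is the explicit tensor-product table of $\mathscr{D}_n$ on the string bimodules of $\mathcal{J}_k$ and its neighbours, together with multiplicativity $[\mathbf{M}(X\otimes_{\Lambda_n}Y)] = [\mathbf{M}X][\mathbf{M}Y]$, non-negativity of the matrices, Perron--Frobenius-type estimates, and the combinatorics of left and right cells inside $\mathcal{J}_k$. The outcome should be that there is a distinguished $1$-morphism of $\mathcal{J}_k$ which $\mathbf{M}$ sends to a $0/1$ ``quasi-idempotent'' matrix whose block pattern is governed by a subset $I \subseteq \{1,\dots,n\}$, and that simple-transitivity then forces $\on{rank}\mathbf{M} = 2n - |I|$. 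Writing $|I| = n-j$ yields the bounds $n \leq \on{rank}\mathbf{M} \leq 2n$ of part (ii), and shows that the whole tuple of action matrices is determined by a subset of size $n-j$, of which there are $\binom{n}{j}$.

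\emph{Move~2} (construction, Section~\ref{s6}): fix a left cell $\mathcal{L} \subseteq \mathcal{J}_k$; Move~1 applied to the cell birepresentation $\mathbf{C}_{\mathcal{L}}$ shows its underlying category is equivalent to $\Bbbk Q\!\on{-proj}$ for $Q$ a disjoint union of $n$ copies of the quiver $1 \to 2$, so $\on{rank}\mathbf{C}_{\mathcal{L}} = 2n$. For each $I \subseteq \{1,\dots,n\}$ the arrows of $Q$ indexed by $I$ form, after a check with the multiplication table, a $\mathscr{D}_n$-stable collection $\mathcal{S}_I$ in $\mathbf{C}_{\mathcal{L}}$; by Example~\ref{LocalizeA2} the localization $\mathbf{C}_{\mathcal{L}}[\mathcal{S}_I^{-1}]$ has underlying category $\Bbbk Q_I\!\on{-proj}$ with the $|I|$ chosen rungs contracted, hence is locally hom-finite and therefore, by Proposition~\ref{FinitaryLocalization} and the subsequent finitary-from-hom-finiteness proposition, a simple transitive finitary birepresentation, still with apex $\mathcal{J}_k$ and now of rank $2n - |I|$. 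Taking $|I| = n-j$ produces $\binom{n}{j}$ birepresentations of rank $n+j$, pairwise non-equivalent because $I$ is recovered from their action matrices as the set of indices at which the distinguished $1$-morphism acts invertibly. \emph{Move~3} (exhaustiveness, Section~\ref{s6}): given an arbitrary simple transitive $\mathbf{M}$ with apex $\mathcal{J}_k$, Move~1 matches its action matrices with those of $\mathbf{C}_{\mathcal{L}}[\mathcal{S}_I^{-1}]$ for a unique $I$. By the standard construction of a morphism out of a cell birepresentation (cf.\ \cite[Section~3]{MM5}, \cite{CM}) there is a nonzero strong transformation $\Psi\colon \mathbf{C}_{\mathcal{L}} \to \mathbf{M}$; the action-matrix comparison forces $\Psi$ to send every morphism in $\mathcal{S}_I(\mathtt{i})$ to an isomorphism, so by the universal property of localization it factors as $\mathbf{C}_{\mathcal{L}} \xrightarrow{\Upsilon} \mathbf{C}_{\mathcal{L}}[\mathcal{S}_I^{-1}] \xrightarrow{\overline{\Psi}} \mathbf{M}$. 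Finally $\overline{\Psi}$ is an equivalence: $\on{Ker}\overline{\Psi}$ is a $\mathscr{D}_n$-stable ideal of the simple transitive $\mathbf{C}_{\mathcal{L}}[\mathcal{S}_I^{-1}]$, proper since $\Psi \neq 0$, hence zero, so $\overline{\Psi}$ is locally faithful and locally essentially surjective, and the equality of ranks $n+j$ promotes it to a local equivalence by a Krull--Schmidt argument. This gives (iii), and (ii) was established in Move~1.

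The main obstacle is Move~1: extracting, purely from non-negativity and the combinatorially delicate tensor-product table of $\mathscr{D}_n$ on string bimodules, the exact list of admissible action matrices and, crucially, the subset-parametrisation that pins down the rank. A secondary difficulty is the final step of Move~3 --- upgrading the faithful, dense $\overline{\Psi}$ to an equivalence --- which relies on the precise rank count from Move~1 and on carrying the whole argument through the (projective) abelianizations, so that the comparison morphism $\Psi$ is available to begin with.
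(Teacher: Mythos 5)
Your proposal follows essentially the same route as the paper: Move~1 is the action-matrix analysis of Section~\ref{s5} culminating in \eqref{ThmII}; Move~2 is the construction via localization of a cell birepresentation, using Example~\ref{LocalizeA2} and Proposition~\ref{FinitaryLocalization}; Move~3 is the exhaustiveness argument of Theorem~\ref{Classification}, where a strong transformation $\Sigma\colon \mathbf{C}\to\mathbf{M}$ is shown to factor through $\mathbf{C}[\euler{S}^{-1}]$ and the factorisation is promoted to an equivalence by a rank and hom-dimension count. One caution on part~(i): you propose restricting to the sub-bicategory generated by $\mathbb{1}_{\mathtt{i}}$ and the projective-injective bimodules $P_{i|j}$ and appealing to \cite[Theorem~12]{MMZ2}; but $\mathcal{J}_{\mathrm{split}}$ is strictly larger than the set of projectives (it also contains $L_{i|j}$, $S^{(0)}_{i|j}$, $N^{(0)}_{i|j}$), so you still owe an argument that this restriction determines $\mathbf{M}$ on all of $\mathcal{J}_{\mathrm{split}}$ --- this is exactly the content the paper delegates to \cite[Section~3.3]{Jo3}.
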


\begin{rmk}
\begin{enumerate}
\item In the case $n=1$, Theorem~\ref{thm_main}$(i)-(ii)$ are parts of \cite[Theorem~1]{Jo3}, and Theorem~\ref{thm_main}$(iii)$ is \cite[Conjecture~2]{Jo3}.
\item Generalizing \cite[Theorem~1$(iv)$]{Jo3}, we note that the cell birepresentation corresponding to one of the left cells in $\mathcal{J}_{M_0}$ is a simple transitive birepresentation of $\mathscr{D}_n$ of rank $n$ with apex $\mathcal{J}_1$. In particular, recall that by \cite[Lemma~1]{CM}, the apex of a transitive $2$-representation must be idempotent. The cell $\mathcal{J}_{M_{0}}$ is not idempotent, and thus does not appear in the classification of Theorem~\ref{thm_main}.
\item The minimal $\mathcal{J}$-cell of the bicategory of $\Lambda_{n}\!\on{-}\!\Lambda_{n}$-bimodules, $\mathcal{J}_{\on{band}}$, is even further from the finitary setting than the remaining $\mathcal{J}$-cells. Indeed, the collection of isomorphism classes of indecomposable band bimodules is parametrized by pairs $(n,\lambda)$, for $n \in \mathbb{Z}_{\geq 0}$ and $\lambda \in \Bbbk$. The classification problem for simple transitive birepresentations with apex $\mathcal{J}_{\on{band}}$ goes beyond the scope of this article, and since in the case of finitary apex our approach allows us to focus on the apex only, the band bimodules will not play a great role in our considerations.
\end{enumerate}
\end{rmk}

The remainder of this paper is devoted to the proof of Theorem~\ref{thm_main}. It is structured as follows: Claim~$(i)$ is proved in Subsection~\ref{ss_pf_main_i}. In Section~\ref{s5} we take a closer look at the two-sided cells $\mathcal{J}_k$ and prove Claim~$(ii)$. Finally, Claim~$(iii)$ is proved in Section~\ref{s6}.

\subsection{\texorpdfstring{Proof of Theorem~\ref{thm_main}$(i)$}{Proof of Theorem 8}}\label{ss_pf_main_i}
Mutatis mutandis \cite[Section~3.3]{Jo3}.

\section{\texorpdfstring{The two-sided cell $\mathcal{J}_k$}{The two-sided cell Jk}}\label{s5}
We fix a positive integer $k$. In this section we shall establish some facts about the two-sided cell $\mathcal{J}_k$ in order to better understand the simple transitive birepresentations of $\mathscr{D}_n$ having it as apex.
For readability we sometimes omit the upper index $(k)$ on the elements of $\mathcal{J}_k$, writing $N_{i|j}$ for $N_{i|j}^{(k)}$ and so on.

\subsection{\texorpdfstring{One-sided cells in $\mathscr{J}_k$}{One-sided cells in Jk}}
In \cite{Jo2}, it was shown that every left  cell in  $\mathcal{J}_k$ contains either bimodules of type $W$ and $S$, or bimodules of type $M$ and $N$. Similarly, every right cell contains either bimodules of type $W$ and $N$, or bimodules of  type $S$ and $M$. More precisely, we have the egg-box diagram below. 
The columns of the diagram are the left cells of $\mathcal{J}_{k}$, and the rows are the right cells of $\mathcal{J}_{k}$.
\begin{displaymath}
\begin{array}{|c|c|c|c|c|c|}
\hline
W_{1|1} & \cdots & W_{1|n} & N_{1|1} & \cdots & N_{1|n} \\
\hline
\vdots & \ddots & \vdots & \vdots & \ddots & \vdots \\
\hline
W_{n|1} & \cdots & W_{n|n} & N_{n|1} & \cdots & N_{n|n} \\
\hline
S_{1|1} & \cdots & S_{1|n} & M_{1|1} & \cdots & M_{1|n} \\
\hline
\vdots & \ddots & \vdots & \vdots & \ddots & \vdots \\
\hline
S_{n|1} & \cdots & S_{n|n} & M_{n|1} & \cdots & M_{n|n} \\
\hline
\end{array}
\end{displaymath}
In other words, in each left cell all elements have the same second coordinate of the initial vertex (i.e. the lower index $i|j$). Similarly, in each right cell all elements have the same first coordinate of the initial vertex.

\subsection{Multiplication table}\label{subs_multtable}
With calculations similar to those in \cite{Jo1}, one can show that, modulo direct summands from two-sided  cells strictly $J$-greater than $\mathcal{J}_k$, the multiplication table  of $\mathcal{J}_k$ is given by
\begin{displaymath}
\begin{array}{c|c|c|c|c}
\otimes_{\Lambda_n} & W_{j|l} & S_{j|l} & N_{j|l} & M_{j|l} \\
\hline
W_{i|j} & W_{i|l} & W_{i|l} & N_{i|l} & N_{i|l} \\
\hline
S_{i|j} & S_{i|l} & S_{i|l} & M_{i|l} & M_{i|l}\\
\hline
N_{i|j} & W_{i|l} & W_{i|l} & N_{i|l} & N_{i|l}\\
\hline
M_{i|j} & S_{i|l} & S_{i|l} & M_{i|l} & M_{i|l} \\
\end{array}
\end{displaymath}
for all $i,j,l\in \{ 1,\ldots ,n\}$,
together with
\begin{align*}
    U_{i|j}\otimes_{\Lambda_n} V_{r|s} =0
\end{align*}
for all $U,V\in \{ M,N,W,S \}$ whenever $j\neq r$.
In particular, modulo two-sided cells strictly $J$-greater than $\mathcal{J}_{k}$, the $1$-morphisms $M^{(k)}_{i|i}$, $N^{(k)}_{i|i}$, $W^{(k)}_{i|i}$ and $S^{(k)}_{i|i}$ are all idempotent. Setting
\begin{align*}
F=\bigoplus_{U\in \mathcal{J}_k} U
\end{align*}
yields $F\otimes F\simeq F^{\oplus 4n}$.

Moreover, setting $F_{i|j}=M^{(k)}_{i|j}\oplus N^{(k)}_{i|j}\oplus W^{(k)}_{i|j}\oplus S^{(k)}_{i|j}$ yields 
\begin{align*}
    F_{i|j}\otimes F_{j|l}= F_{i|l}^{\oplus 4}
\end{align*}
and $F_{i|j}\otimes F_{r|s}=0$ for $j\neq r$.

\subsection{Adjoint pairs}
In contrast to the fiat/fiab setting, not every $1$-morphism of $\csym{D}_{n}$ admits a left or right adjoint. We now describe the adjoint pairs in $\mathcal{J}_{k}$.

\begin{prop}\label{prop_adj}
For any non-negative integer $k$ and any $i,j=1,\ldots ,n$, the pair $(S_{i|j}^{(k)}\otimes_{\Lambda_n}-,N_{j|i}^{(k)}\otimes_{\Lambda_n}-)$ is an adjoint pair of endofunctors of $\Lambda_n$-mod.
\end{prop}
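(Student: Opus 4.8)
The plan is to exhibit the adjunction explicitly by constructing the unit and counit natural transformations and checking the triangle identities, working with the concrete diagrammatic presentations of the bimodules $S_{i|j}^{(k)}$ and $N_{j|i}^{(k)}$ given in Section~\ref{s4}. Since tensoring $S_{i|j}^{(k)}\otimes_{\Lambda_n}-$ with a left $\Lambda_n$-module is left adjoint to $\Hom_{\Lambda_n}(S_{i|j}^{(k)},-)$ by the standard tensor-hom adjunction, it suffices to produce a natural isomorphism of functors $\Hom_{\Lambda_n}(S_{i|j}^{(k)},-)\cong N_{j|i}^{(k)}\otimes_{\Lambda_n}-$. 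Equivalently, I would show that $N_{j|i}^{(k)}$ carries the structure of the $\Lambda_n$-$\Lambda_n$-bimodule dual $\Hom_{\Lambda_n}\!\bigl(S_{i|j}^{(k)},\Lambda_n\bigr)$ — that is, that as a right $\Lambda_n$-module $N_{j|i}^{(k)}$ is the $\Lambda_n$-linear dual of $S_{i|j}^{(k)}$ as a left $\Lambda_n$-module, with compatible left action. This reduces the whole statement to a finite, structural identification of bimodules.

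First I would set up notation: read off from the diagrams the underlying vector spaces of $S_{i|j}^{(k)}$ (a ``zig-zag'' string starting at vertex $i|j$ with $k$ valleys, ending in a descent on the left) and of $N_{j|i}^{(k)}$, and record the matrices of the generators $\alpha_t\otimes\varepsilon_s$ and $\varepsilon_t\otimes\alpha_s^{\mathrm{op}}$ in suitable bases. The key observation is that the string walk defining $N_{j|i}^{(k)}$ is, up to relabeling $i\leftrightarrow j$ and reversing the direction of the walk, the formal ``transpose'' of the string defining $S_{i|j}^{(k)}$: reversing a string swaps the roles of the left and right actions (i.e. transposes rows and columns of the torus quiver), which is exactly what $\Lambda_n$-linear duality does to a string module. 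Concretely, I would fix the $\Lambda_n$-bimodule map $\operatorname{ev}\colon S_{i|j}^{(k)}\otimes_{\Lambda_n}N_{j|i}^{(k)}\to \Lambda_n$ (or rather, land in the correct $1$-morphism after taking into account the apex — but here we want a genuine adjunction of endofunctors of $\Lambda_n$-mod, so the target is genuinely $\Lambda_n$) on basis elements, and dually the coevaluation $\Lambda_n\to N_{j|i}^{(k)}\otimes_{\Lambda_n}S_{i|j}^{(k)}$, by pairing up the matching one-dimensional pieces of the two strings. From these one builds the unit $\eta\colon \id\Rightarrow N_{j|i}^{(k)}\otimes S_{i|j}^{(k)}\otimes-$ and counit $\varepsilon\colon S_{i|j}^{(k)}\otimes N_{j|i}^{(k)}\otimes-\Rightarrow\id$.

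The remaining step is to verify the two triangle (zig-zag) identities $(\varepsilon S)\circ(S\eta)=\id_{S}$ and $(N\varepsilon)\circ(\eta N)=\id_{N}$. Because all the structure maps are compositions of the explicitly-written identity-action arrows in the diagrams, this becomes a bookkeeping check that, when one traces a basis vector of the string through the zig-zag composite, the matching and unmatching segments cancel correctly, leaving the identity. I expect \textbf{this triangle-identity verification to be the main obstacle}: not because any single computation is deep, but because one must set up the bases for the general-$k$ strings $S_{i|j}^{(k)}$ and $N_{j|i}^{(k)}$ carefully enough that the combinatorics of which one-dimensional summand pairs with which is transparent, and handle the $n=1$ and small-$k$ degenerate cases (where the string has length $1$ or $2$, so $L_{i|j}=W_{i|j}^{(0)}$, $S_{i|j}^{(0)}$, $N_{i|j}^{(0)}$ behave slightly differently) without a separate argument. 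A cleaner alternative, which I would pursue if the direct check gets unwieldy, is to invoke the tensor-hom adjunction as above and instead prove the single isomorphism $N_{j|i}^{(k)}\cong\Hom_{\Lambda_n}\!\bigl(S_{i|j}^{(k)},\Lambda_n\bigr)$ of $\Lambda_n$-$\Lambda_n$-bimodules directly by comparing diagrammatic presentations — duality reverses the string and transposes the torus, and one checks that the reversed-transposed string of $S_{i|j}^{(k)}$ is exactly the defining string of $N_{j|i}^{(k)}$ — which sidesteps the triangle identities entirely at the cost of one bimodule-presentation computation.
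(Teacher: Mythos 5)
Your ``cleaner alternative'' is in fact the route the paper takes: reduce, via the tensor-hom adjunction, to a bimodule identification $N_{j|i}^{(k)}\cong\Hom_{\Lambda_n}(S_{i|j}^{(k)},\Lambda_n)$, which the paper then verifies by an explicit basis calculation. However, as you have stated it, there is a genuine gap in the reduction. The step ``it suffices to produce a natural isomorphism $\Hom_{\Lambda_n}(S_{i|j}^{(k)},-)\cong N_{j|i}^{(k)}\otimes_{\Lambda_n}-$, equivalently $N_{j|i}^{(k)}\cong\Hom_{\Lambda_n}(S_{i|j}^{(k)},\Lambda_n)$ as bimodules'' is not an equivalence for an arbitrary bimodule $S$. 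The canonical natural transformation $\Hom_{\Lambda_n}(S,\Lambda_n)\otimes_{\Lambda_n}-\Rightarrow\Hom_{\Lambda_n}(S,-)$ is an isomorphism only when $S$ is finitely generated projective as a left $\Lambda_n$-module; without that, identifying $N$ with the bimodule dual of $S$ does not give you the desired isomorphism of right adjoints. This is exactly why the paper invokes \cite[Lemma~13]{MZ2}, whose hypotheses are (a) $S_{i|j}^{(k)}$ is projective as a left $\Lambda_n$-module and (b) $\Hom_{\Lambda_n}(S_{i|j}^{(k)},\Lambda_n)\simeq N_{j|i}^{(k)}$ as bimodules; the paper then checks (a) by writing $S_{i|j}^{(k)}\simeq\Lambda_n\varepsilon_i\oplus\cdots\oplus\Lambda_n\varepsilon_{i+k}$ and (b) by the explicit basis computation you also envision.

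So your plan needs one extra observation: verify that $S_{i|j}^{(k)}$ is left-projective (which is visible from the string diagram, since the top-to-bottom runs of the string are exactly the projective left $\Lambda_n$-modules $\Lambda_n\varepsilon_{i+m}$). Interestingly, your primary plan of constructing evaluation and coevaluation and checking the triangle identities is not wrong --- the coevaluation $\Lambda_n\to N_{j|i}^{(k)}\otimes_{\Lambda_n}S_{i|j}^{(k)}$ exists precisely because $S_{i|j}^{(k)}$ is left-projective, so that approach would force you to confront projectivity during the construction. By ``sidestepping the triangle identities entirely,'' your alternative silently drops the place where projectivity would have been used. Add the projectivity check and your reduction becomes correct and coincides with the paper's argument.
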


\begin{proof}
By \cite[Lemma 13]{MZ2}, it is enough to show that $S_{i|j}^{(k)}$ is projective as a left $\Lambda_n$-module
and that $\Hom _{\Lambda_n\!\on{-mod}}(S_{i|j}^{(k)},\Lambda_n)\simeq N_{j|i}^{(k)}$ as $\Lambda_n$-$\Lambda_n$-bimodules.
Indeed, as left $\Lambda_n$-module $S_{i|j}^{(k)}$ is isomorphic to
\begin{align*}
\Lambda_n\varepsilon_i\oplus \Lambda_n\varepsilon_{i+1}\oplus \ldots \oplus \Lambda_n\varepsilon_{i+k}.
\end{align*}

Consider now the diagrammatic representation of the bimodules $S_{i|j}^{(k)}$ and $N_{j|i}^{(k)}$ in standard bases and with standard $\Lambda_n$-action.
\[
\begin{tikzcd}[sep = small]
 s_{i|j} \arrow[d] \\
 s_{i+1|j} & s_{i+1|j+1} \arrow[l] \arrow[d] \\
 & \ddots & s_{i+k|j+k} \arrow[d] \arrow[l] \\
 & & s_{i+k+1|j+k}
\end{tikzcd}
\]
\[
\begin{tikzcd}[sep = small]
 n_{j|i-1} & n_{j|i} \arrow[l] \arrow[d] \\
 & n_{j+1|i} & n_{j+1|i+1} \arrow[l] \arrow[d] \\
 & & \ddots & n_{j+k|i+k+1} \arrow[l]
\end{tikzcd}
\]

Recall that we have
\begin{align*}
\varepsilon_i s_{i|j}=s_{i|j}=s_{i|j}\varepsilon_j 
\end{align*}
and so on, but also
\begin{align*}
\varepsilon_i s_{i+n|j+n}=s_{i+n|j+n}=s_{i+n|j+n}\varepsilon_j
\end{align*}
and so on.

Now, define linear maps $S_{i|j}^{(k)}\to \Lambda_n$ as follows:
\begin{align*}
f_{i+m|j+m}&: \ 
\begin{cases}
 s_{i+m|j+m}\mapsto \varepsilon_{i+m}\\
 s_{i+m+1|j+m}\mapsto \alpha_{i+m}
\end{cases} ,
  \\
g_{i+m-1|j+m}&:\ s_{i+m|j+m}\mapsto \alpha_{i+m-1}
\end{align*}
with all basis vectors not indicated above mapped to 0.

It is easy to check that
\begin{align*}
\varphi: \Hom_{\Lambda_n\!\on{-mod}}(S_{i|j}^{(k)},\Lambda_n) &\to N_{j|i}^{(k)}\\
f_{i+m|j+m}&\mapsto n_{j+m|i+m}\\
g_{i+m-1|j+m} &\mapsto n_{j+m|i+m-1}
\end{align*}
is  an isomorphism of $\Lambda_n$-$\Lambda_n$-bimodules.
\end{proof}

\begin{cor}\label{ProjTypeN}
Let $\mathbf{M}$ be a simple transitive 2-representation of $\mathscr{D}_n$ with apex $\mathcal{J}_k$, for some $k\geq 1$. Then for all $i,j\in \{1,\ldots ,n\}$, the functor $\overline{\mathbf{M}}(N_{i|j}^{(k)})$ is a projective functor.
\end{cor}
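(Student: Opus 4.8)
The plan is to exploit the adjunction established in Proposition~\ref{prop_adj} together with the general machinery relating projective functors to adjoints. Recall that for a finitary birepresentation $\mathbf{M}$, a $1$-morphism $\mathrm{F}$ acts by a projective functor on the abelianization $\overline{\mathbf{M}}$ precisely when $\overline{\mathbf{M}}\mathrm{F}$ has a right adjoint that is exact; equivalently, projective functors are exactly those sending projective objects to projective objects, and a left adjoint of a functor between module categories preserves projectives whenever the right adjoint is exact. Since $\mathbf{M}$ is a birepresentation, it is a pseudofunctor, so the adjoint pair $(S_{i|j}^{(k)}\otimes_{\Lambda_n}-,\,N_{j|i}^{(k)}\otimes_{\Lambda_n}-)$ from Proposition~\ref{prop_adj} is carried to an adjoint pair $(\mathbf{M}S_{i|j}^{(k)},\,\mathbf{M}N_{j|i}^{(k)})$, and the same holds after abelianization: $(\overline{\mathbf{M}}S_{i|j}^{(k)},\,\overline{\mathbf{M}}N_{j|i}^{(k)})$ is an adjoint pair of functors on the abelian category $\overline{\mathbf{M}}(\mathtt{i})$.

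First I would note that $\overline{\mathbf{M}}N_{j|i}^{(k)}$, being a right adjoint, is left exact; since the abelianization is by construction a pseudofunctor into $\mathfrak{R}_\Bbbk$, i.e.\ into right exact functors, $\overline{\mathbf{M}}N_{j|i}^{(k)}$ is also right exact, hence exact. Next, $\overline{\mathbf{M}}N_{j|i}^{(k)}$ is the right adjoint of $\overline{\mathbf{M}}S_{i|j}^{(k)}$, so by adjunction the left adjoint $\overline{\mathbf{M}}S_{i|j}^{(k)}$ preserves projective objects (a left adjoint whose right adjoint is exact takes projectives to projectives). But this only shows that the $S$-type morphisms act by projective functors. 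To get the $N$-type morphisms I would instead use the multiplication table in Subsection~\ref{subs_multtable}: modulo two-sided cells strictly $J$-greater than $\mathcal{J}_k$ we have $S_{i|j}^{(k)}\otimes_{\Lambda_n} N_{j|l}^{(k)} \simeq M_{i|l}^{(k)}$ and, more usefully, one can realize $N_{i|j}^{(k)}$ as a summand of a composite involving an $S$-type morphism. Concretely, since $W_{i|j}^{(k)}\otimes N_{j|l}^{(k)} = N_{i|l}^{(k)}$ and $W$-type morphisms appear in suitable composites — and since the apex $\mathcal{J}_k$ is idempotent — there is a $1$-morphism in $\mathcal{J}_k$ of $S$-type together with appropriate companions so that $N_{i|j}^{(k)}$ is a direct summand of a composite $G\circ S_{r|s}^{(k)}\circ H$ with the greater cells annihilated by $\mathbf{M}$ (as they lie strictly above the apex). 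Then $\overline{\mathbf{M}}N_{i|j}^{(k)}$ is a summand of a composite of functors one of which is $\overline{\mathbf{M}}S_{r|s}^{(k)}$, which is projective, and since composites of projective functors are projective and summands of projective functors are projective, we conclude.

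Actually the cleanest route, which I would favor, is to observe directly from the multiplication table that $N_{i|j}^{(k)}$ is isomorphic, modulo strictly $J$-greater cells, to $W_{i|j}^{(k)}\otimes_{\Lambda_n} N_{j|j}^{(k)}$ and that $N_{j|j}^{(k)}$ is itself a summand of $N_{j|i}^{(k)}\otimes_{\Lambda_n} S_{i|j}^{(k)}$ up to higher cells — the latter because $N_{j|i}^{(k)}\otimes S_{i|j}^{(k)} = M_{j|j}^{(k)}$ and $M_{j|j}^{(k)}$, being idempotent modulo higher cells, contains all four $F_{j|j}$-summands after enough self-composition, cf.\ the identity $F_{i|j}\otimes F_{j|l} = F_{i|l}^{\oplus 4}$. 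Since the strictly $J$-greater cells ($\mathcal{J}_{M_0}$ and $\mathcal{J}_{\mathrm{split}}$) are not the apex of $\mathbf{M}$, they are annihilated by $\mathbf{M}$, so these isomorphisms modulo higher cells become honest isomorphisms after applying $\mathbf{M}$ (and $\overline{\mathbf{M}}$). Thus $\overline{\mathbf{M}}N_{i|j}^{(k)}$ is, up to the higher cells acting as zero, built from $\overline{\mathbf{M}}S_{i|j}^{(k)}$ by pre- and post-composition with other cell functors, all of which are exact endofunctors of an abelian category of the form $A\!\on{-mod}$; since $\overline{\mathbf{M}}S_{i|j}^{(k)}$ preserves projectives and the other factors are right exact, the composite preserves projectives, so $\overline{\mathbf{M}}N_{i|j}^{(k)}$ is a projective functor.

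The main obstacle I anticipate is the bookkeeping needed to pass from ``modulo $J$-greater cells'' identities to genuine functor isomorphisms: one must verify carefully that the $1$-morphisms in $\mathcal{J}_{M_0}$ and $\mathcal{J}_\mathrm{split}$ really are annihilated by a simple transitive birepresentation with apex $\mathcal{J}_k$ (this follows from the definition of apex via \cite[Lemma~1]{CM}, since $\mathcal{J}_k <_J \mathcal{J}_{M_0} <_J \mathcal{J}_\mathrm{split}$ and the apex is the maximal non-annihilated two-sided cell), and that the error terms in the multiplication table indeed all lie in those strictly greater cells rather than in $\mathcal{J}_k$ itself. A secondary technical point is to confirm that $\overline{\mathbf{M}}$ of an adjoint pair is again an adjoint pair — this is standard for the abelianization construction of \cite{MMMT} since it is a pseudofunctor and adjunctions are preserved by pseudofunctors — and that a left adjoint whose right adjoint is exact preserves projectivity, which is the elementary fact that $\Hom(\overline{\mathbf{M}}S_{i|j}^{(k)}(P),-)\cong \Hom(P,\overline{\mathbf{M}}N_{j|i}^{(k)}(-))$ is exact when $P$ is projective and $\overline{\mathbf{M}}N_{j|i}^{(k)}$ is exact.
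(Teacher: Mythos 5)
Your opening observation is correct and in fact reproduces the paper's key step: using Proposition~\ref{prop_adj} you conclude that $\overline{\mathbf{M}}N_{j|i}^{(k)}$ is a right adjoint, hence left exact, and combined with right exactness (automatic from $\overline{\mathbf{M}}$ landing in $\mathfrak{R}_\Bbbk$) this gives exactness. This is exactly where the paper's proof stops, invoking \cite[Theorem~4]{Jo3} to pass from left exactness of $\overline{\mathbf{M}}N_{i|j}^{(k)}$ to its being a projective functor. You do not cite this result; instead you rely on the claimed ``recall'' that a $1$-morphism acts by a projective functor precisely when $\overline{\mathbf{M}}F$ preserves projectives, or equivalently when $\overline{\mathbf{M}}F$ has an exact right adjoint. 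This is the genuine gap: the equivalence is false in general, and in this context it is trivially too weak. Every $\overline{\mathbf{M}}F$ preserves projectives, because $\mathbf{M}(\mathtt{i})$ is the category of projectives in $\overline{\mathbf{M}}(\mathtt{i})$ and $\overline{\mathbf{M}}F$ restricts to $\mathbf{M}F$ on projectives; yet in the non-fiat setting it is precisely the point (emphasized throughout the paper and in \cite{Jo3}, \cite{St}) that $1$-morphisms need not act by projective functors. Similarly, an exact right adjoint only tells you that the underlying bimodule of $\overline{\mathbf{M}}S$ is projective as a one-sided module, which is strictly weaker than being a projective bimodule, i.e.\ a summand of $Ae\otimes_\Bbbk fA$'s.

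The subsequent detour through the multiplication table inherits this problem: you are trying to transport projectivity from $\overline{\mathbf{M}}S$ to $\overline{\mathbf{M}}N$ by composing with other cell functors, but ``the other factors are right exact'' does not make the composite a projective functor (again, preserving projectives is automatic and insufficient). The needed ingredient is a result that converts \emph{exactness} plus the structural constraints of a simple transitive birepresentation of $\mathscr{D}_n$ into projectivity; that is precisely the content of \cite[Theorem~4]{Jo3}, which the paper cites and you would need to cite (or reprove) as well. Ironically, once you have that result in hand, your very first observation — that $\overline{\mathbf{M}}N_{i|j}^{(k)}$ is (left) exact — finishes the proof directly, and the detour via $S$ and the multiplication table is unnecessary.
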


\begin{proof}
By Proposition~\ref{prop_adj}, each $\overline{\mathbf{M}}(N_{i|j}^{(k)})$ is left exact.  Therefore the result  follows by \cite[Theorem~4]{Jo3}.
\end{proof}

\subsection{Action matrices}\label{subs_matrixblocks}
Let $\mathbf{M}$ be a simple transitive 2-representation of $\mathscr{D}_n$ with apex $\mathcal{J}_k$. In this section we completely classify all possible action matrices for $\mathbf{M}U$, where $U$ is in the $\mathcal{J}_k$.
This is done by block decomposition of the action matrix of $\mathbf{M}F$ (with $F$ as in the previous subsection), and reduction of diagonal blocks to the case $n=1$, which was considered in \cite{Jo3}.
As a by-product of this computation, we prove Theorem~\ref{thm_main}$(ii)$.
Thereafter we consider off-diagonal blocks.

To simplify notation, for a $1$-morphism $G$, we write $[G]$ instead of $[\mathbf{M}(G)]$.

\subsubsection{Block decomposition and diagonal blocks}
Using definitions and the results from the previous section, we observe the following:
\begin{itemize}
\item Since $\mathbf{M}$ is simple transitive, each entry of $[F]$ is a positive integer. By the above, it satisfies $[F]^2=4n[F]$, so its trace is $4n$.
\item As a simple transitive 2-representation does not annihilate any element of its apex, each $[U_{i|j}^{(k)}]$ is nonzero.
\item For $i=1,\ldots ,n$ and $U\in \{M,N,S,W\}$, each entry of $[U_{i|i}^{(k)}]$ is a nonnegative integer, and the matrix itself is idempotent. Thus these matrices have diagonal elements 0 and/or 1, and trace equal to rank.
\item For $i\neq j$, each $[U_{i|j}^{(k)}]$ has nonnegative integer entries and  squares to 0, and therefore has zero diagonal.
\end{itemize}
As the number of 1-morphisms of the form $U_{i|i}^{(k)}$ is exactly $4n$, we conclude that the diagonal of each $[U_{i|i}^{(k)}]$ contains exactly one entry equal to 1, and all the remaining diagonal entries are zeros.

If $[U_{i|i}^{(k)}]$ and $[V_{j|j}^{(k)}]$ have their unique 1 on the diagonal in the same position, then $[U_{i|i}^{(k)}][V_{j|j}^{(k)}]=[U_{i|i}^{(k)}\otimes_{\Lambda_{n}} V_{j|j}^{(k)}]\neq 0$. This implies $i=j$.

We can now choose an ordering of the indecomposable objects in $\mathbf{M}(\mathtt{i})$ such that the first elements of $\mathrm{diag}[F]$ are the nonzero elements of $\mathrm{diag}[F_{1|1}]$, the next elements of $\mathrm{diag}[F]$ are the nonzero elements of $\mathrm{diag}[F_{2|2}]$, and so on.

Let $n_1$ be the number of nonzero elements in $\mathrm{diag}[F_{1|1}]$. Assume that $[F_{i|j}]$ has a nonzero element in one of the first $n_1$ rows. Then $[F_{1|1}][F_{i|j}]\neq 0$, implying that $i=1$. Similarly, if $[F_{i|j}]$ has a nonzero element in on of the first $n_1$ columns, then $[F_{i|j}][F_{1|1}]\neq 0$, implying $j=1$. The same arguments can be applied to the rows and columns where $[F_{2|2}],\ldots ,[F_{n|n}]$ have their nonzero diagonal entries.

By the above argument, we can write $[F]$ as a block matrix
\begin{align*}
[F]=
\begin{bmatrix}
A_{1|1} & \cdots & A_{1|n}\\
\vdots & \ddots & \vdots \\
A_{n|1} & \cdots & A_{n|n}
\end{bmatrix}
\end{align*}
where the block $A_{i|j}$ is the nonzero part of $[F_{i|j}]$. For example, $[F_{1|2}]$ is the block matrix
\begin{align*}
\begin{bmatrix}
0 & A_{1|2} & 0\\
0 & 0 & 0
\end{bmatrix} .
\end{align*}

Then each of the blocks $A_{i|j}$ is a matrix with only positive integer entries. 
We have
\begin{align*}
    [F_{i|j}][F_{r|s}]=4\delta_{jr}[F_{i|s}],
\end{align*}
where $\delta$ denotes the Kronecker delta.
In particular each $A_{i|i}$ must satisfy the relation $A_{i|i}A_{i|i}=4A_{i|i}$.
Moreover, let $A_{i|j}(U)$ be the part of $[U_{i|j}^{(k)}]$ contained in the block $A_{i|j}$. 
Using the above calculations and Proposition~\ref{prop_adj} in the case $i=j$, we see that  $A_{i|i}(M)$, $A_{i|i}(N)$, $A_{i|i}(S)$ and $A_{i|i}(W)$ satisfy all the relations of $[M_k]$, $[N_k]$, $[S_k]$ and $[W_k]$ from \cite[Section~4]{Jo3}.
Consequently, we have, for each $i$, either
\begin{align*}
A_{i|i}(M)=A_{i|i}(N)=A_{i|i}(S)=A_{i|i}(W)=[1],
\end{align*}
or
\begin{align*}
A_{i|i}(N)=A_{i|i}(W)=&\begin{bmatrix}
1&1\\ 0& 0
\end{bmatrix}\\
A_{i|i}(S)=A_{i|i}(M)=&\begin{bmatrix}
0&0\\
1&1
\end{bmatrix},
\end{align*}
up to natural action of the symmetric group $S_{2}$ on $\on{Mat}_{\mathbb{Z}_{\geq 0}}(2\times 2)$.

By definition, the matrix $[F]$ is of size $\mathrm{rank} \mathbf{M}\times \mathrm{rank} \mathbf{M}$.
Therefore we can note that
\begin{align}\label{ThmII}
\mathrm{rank}\mathbf{M} = 2n- \left| \{ i\mid A_{i|i}(N)=[1]\}\right| = n+ \left| \left\lbrace i\mid A_{i|i}(N)=\begin{bmatrix}
1&1\\ 0& 0
\end{bmatrix}\right\rbrace \right| .
\end{align}
This proves Theorem~\ref{thm_main}$(ii)$.

\subsubsection{Off-diagonal blocks}
Assume now that $n\geq 2$ and fix $i,j\in \{1,\ldots ,n\}$. We shall now describe $A_{i|j}$ and $A_{j|i}$ using our previous observations about $A_{i|i}$ and $A_{j|j}$.

If $A_{i|i}(U)=A_{j|j}(U)=[1]$ for all $U$, then both $A_{i|j}$ and $A_{j|i}$ are $1\times 1$-matrices. Since
\begin{align*}
A_{i|j}(U)A_{j|i}(U)=A_{i|i}(U)=[1]
\end{align*}
for all $U$, we have $A_{i|j}(U)=A_{j|i}(U)=[1]$ for all $U$ as well.

Assume now that $A_{i|i}(U)=[1]$ for all $U$, and that
\begin{align*}
A_{i|i}(N)=A_{i|i}(W)=&\begin{bmatrix}
1&1\\ 0& 0
\end{bmatrix}\\
A_{i|i}(S)=A_{i|i}(M)=&\begin{bmatrix}
0&0\\
1&1
\end{bmatrix}.
\end{align*}
Then $A_{i|j}$ is a $1\times 2$-matrix and $A_{j|i}$ is a $2\times 1$-matrix. If we write $A_{i|j}(N)=[a\ b]$ and
$A_{j|i}(N)=\begin{bmatrix}
c\\ d
\end{bmatrix}$, then
\begin{align*}
A_{i|j}(N)=A_{i|j}(N)A_{j|j}(N)= [a\ a].
\end{align*}
Similarly
\begin{align*}
A_{j|i}(N)=A_{j|j}(N)A_{j|i}(N) = \begin{bmatrix} c+d\\ 0\end{bmatrix},
\end{align*}
implying $d=0$. Now
\begin{align*}
[1]= A_{i|i}(N)= A_{i|j}(N)A_{j|i}(N) =
\begin{bmatrix}
a&a
\end{bmatrix}
\begin{bmatrix}
c\\ 0
\end{bmatrix}
= [ac],
\end{align*}
so that $a=c=1$.

Applying similar arguments to $W$, $S$, $M$, we find
\begin{align*}
A_{i|j}(N)=A_{i|j}(W)=A_{i|j}(S)=A_{i|j}(M)=
\begin{bmatrix}
1 & 1
\end{bmatrix}&, \\
A_{j|i}(N)=A_{j|i}(W) =
\begin{bmatrix}
1\\ 0
\end{bmatrix}&, \\
A_{j|i}(S)=A_{j|i}(M)=
\begin{bmatrix}
0\\ 1
\end{bmatrix} &.
\end{align*}

Finally, assume
\begin{align*}
A_{i|i}(N)=A_{i|i}(W)=A_{j|j}(N)=A_{j|j}(W)=&\begin{bmatrix}
1&1\\ 0& 0
\end{bmatrix}\\
A_{i|i}(S)=A_{i|i}(M)=A_{j|j}(S)=A_{j|j}(M)=&\begin{bmatrix}
0&0\\
1&1
\end{bmatrix}.
\end{align*}
If we write
$A_{i|j}(N)=
\begin{bmatrix}
a&b\\ c&d
\end{bmatrix}$, then
\begin{align*}
\begin{bmatrix}
a&b\\ c&d
\end{bmatrix}
=
A_{i|j}(N)
=
A_{i|i}(N)A_{i|j}(N)
=\begin{bmatrix}
a+c & b+d \\ 0&0
\end{bmatrix},
\end{align*}
so that $c=d=0$.
By the same argument, $A_{j|i}(N)=\begin{bmatrix}
a' & b'\\ 0&0
\end{bmatrix}$
for some $a',b'$. Then
\begin{align*}
\begin{bmatrix}
1&1\\ 0& 0
\end{bmatrix} =A_{i|i}(N) = A_{i|j}(N)A_{j|i}(N) = 
\begin{bmatrix}
aa' & ab' \\ 0&0
\end{bmatrix},
\end{align*}
so that $a=a'=b'=1$.
Using
\begin{align*}
\begin{bmatrix}
1&1\\ 0& 0
\end{bmatrix} =A_{j|j}(N) = A_{j|i}(N)A_{i|j}(N)
\end{align*}
yields also $b=0$, so that
\begin{align*}
A_{i|j}(N)=A_{j|i}(N)=\begin{bmatrix}
1&1\\ 0& 0
\end{bmatrix}.
\end{align*}
By similar arguments, we conclude that
\begin{align*}
A_{i|j}(N)=A_{i|j}(W)=A_{j|i}(N)=A_{j|i}(W)=&\begin{bmatrix}
1&1\\ 0& 0
\end{bmatrix}\\
A_{i|j}(S)=A_{i|j}(M)=A_{j|i}(S)=A_{j|i}(M)=&\begin{bmatrix}
0&0\\
1&1
\end{bmatrix}.
\end{align*}

\section{\texorpdfstring{Simple transitive birepresentations of $\mathscr{D}_n$}{Simple transitive birepresentations of Dn}}\label{s6}
In this section we prove  Theorem~\ref{thm_main}$(iii)$.

 Let $\mathbf{M}$ be a simple transitive birepresentation of $\csym{D}_{n}$. Let $B$ be a basic algebra such that $\mathbf{M}(\mathtt{i}) \simeq B\!\on{-proj}$. In the arguments that follow, we identify these two categories, in particular, we will write $Be \in \mathbf{M}(\mathtt{i})$, for an idempotent $e \in B$. Further, the indecomposable $1$-morphisms of the form $U_{i|j}$, for $U \in \setj{M,N,W,S}$, are always assumed to lie in the apex of $\mathbf{M}$.
 
 From the earlier matrix calculations, we know that, for any $i,j \in \setj{1,\ldots, n}$ and $U \in \setj{M,N,W,S}$, the action matrix $[U_{i|j}]$ of the indecomposable $1$-morphism $U_{i|j}$ has a unique non-zero row. 
 Hence there is an indecomposable object $Be_{i_{U}}$ of $\mathbf{M}(\mathtt{i})$ such that the essential image of $\mathbf{M}U_{i|j}$ lies in $\on{add}\setj{Be_{i_{U}}}$. 
 From our matrix analysis it also follows that, for all $i$, we have $\on{add}\setj{Be_{i_{N}}} = \on{add}\setj{Be_{i_{W}}}$ and $\on{add}\setj{Be_{i_{M}}} = \on{add}\setj{Be_{i_{S}}}$, so $Be_{i_{N}} \simeq Be_{i_{W}}$ and $Be_{i_{M}} \simeq Be_{i_{S}}$. 
 Similarly, we have $Be_{i_{S}} \simeq Be_{i_{N}}$ if and only if the block matrix $A_{i|i}$ is $1 \times 1$. In this case, we choose a representative $Be_{i_{S,N}}$ of the resulting common isomorphism class. We will often indicate this case by writing $e_{i_{S}} = e_{i_{N}}$, and it's negation by $e_{i_{S}} \neq e_{i_{N}}$. In the former case, we denote the common idempotent by $e_{i_{S,N}}$.

 Finally, we have also shown that, for any $j$, all the rows in the matrix 
 \[
 \left[\bigoplus_{i=1}^{n} (N_{i|j} \oplus S_{i|j})\right]
 \]
 are non-zero, so any indecomposable object in $\mathbf{M}(\mathtt{i})$ is isomorphic to an object of the form $Be_{i_{N}}$ or $Be_{i_{S}}$, for some $i$. Thus, the collection
\[
 \bigcup_{e_{i_{S}} \neq e_{i_{N}}}\!\setj{Be_{i_{S}},Be_{i_{N}}} \quad \cup\!\bigcup_{e_{i_{S}} = e_{i_{N}}}\!\setj{Be_{i_{S,N}}}
\]
is a complete and irredundant collection of isomorphism classes of indecomposable objects of $\mathbf{M}(\mathtt{i})$. Immediately, we obtain 
\[
\on{rank}\mathbf{M} = 2n - \left|\setj{i \; | \; e_{i_{S}} = e_{i_{N}} }\right| = n + \left|\setj{i \; | \; e_{i_{S}} \neq e_{i_{N}} }\right| ,
\]
which can be viewed as a rewriting of \eqref{ThmII}.

\begin{proposition}\label{MNindecProj}
 Let $U \in \setj{M_{i|j},N_{i|j}}$. Then $\overline{\mathbf{M}}U$ is an indecomposable projective functor.
\end{proposition}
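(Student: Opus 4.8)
The plan is to establish projectivity first, then indecomposability. For $N_{i|j}$ projectivity is Corollary~\ref{ProjTypeN}. For $M_{i|j}$ I would read off from Subsection~\ref{subs_multtable} that $S_{i|i}\otimes_{\Lambda_n} N_{i|j}\cong M_{i|j}$ modulo summands in two-sided cells strictly $J$-above $\mathcal{J}_k$; these cells are annihilated by $\mathbf{M}$ since $\mathcal{J}_k$ is its apex, so $\overline{\mathbf{M}}M_{i|j}\cong\overline{\mathbf{M}}S_{i|i}\circ\overline{\mathbf{M}}N_{i|j}$. Here $\overline{\mathbf{M}}N_{i|j}$ is a projective functor, that is, $\overline{\mathbf{M}}N_{i|j}\cong P\otimes_B(-)$ with $P$ a projective $B$-bimodule, hence a finite direct sum of bimodules $Be_a\otimes_\Bbbk e_bB$; while $\overline{\mathbf{M}}S_{i|i}$ is right exact and, by the action-matrix analysis of Subsection~\ref{subs_matrixblocks}, sends projectives to projectives, so by Eilenberg--Watts $\overline{\mathbf{M}}S_{i|i}\cong R\otimes_B(-)$ with $R=\overline{\mathbf{M}}S_{i|i}(B)$ projective as a left $B$-module. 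Then $\overline{\mathbf{M}}M_{i|j}\cong(R\otimes_BP)\otimes_B(-)$, and $R\otimes_B(Be_a\otimes_\Bbbk e_bB)\cong(Re_a)\otimes_\Bbbk e_bB$ with $Re_a$ a projective left module, so $R\otimes_BP$ is again a projective bimodule and $\overline{\mathbf{M}}M_{i|j}$ is a projective functor.

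Next I would reduce indecomposability to a statement about right $B$-modules. By the remarks opening Section~\ref{s6}, the essential image of $\mathbf{M}U$ lies in $\on{add}\setj{Be_p}$ for a single indecomposable projective $Be_p$ (with $p=i_N$ for $U=N_{i|j}$ and $p=i_S=i_M$ for $U=M_{i|j}$); combined with the classification of projective bimodules this forces $\overline{\mathbf{M}}U\cong Be_p\otimes_\Bbbk V\otimes_B(-)$ for a projective right $B$-module $V$. The endomorphism algebra of this functor is $e_pBe_p\otimes_\Bbbk\End_B(V)$, which is local iff $\End_B(V)$ is local, as $e_pBe_p$ is local ($e_p$ being primitive in the basic algebra $B$); hence $\overline{\mathbf{M}}U$ is indecomposable iff $V$ is, and since $V$ is projective this amounts to $V$ having simple top. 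As a projective functor is exact, this is equivalent to $\overline{\mathbf{M}}U$ annihilating all but one simple $B$-module.

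The crux, and the step I expect to be the main obstacle, is therefore to show that $\overline{\mathbf{M}}N_{i|j}$ and $\overline{\mathbf{M}}M_{i|j}$ each kill all but one simple. The action matrices of Subsection~\ref{subs_matrixblocks} do not suffice for this on their own --- one can realise the same action matrix with a decomposable projective bimodule, e.g.\ a sum of two rank-one pieces --- so I would bring in the further structure: the adjunction $(\overline{\mathbf{M}}S_{j|i},\overline{\mathbf{M}}N_{i|j})$ of Proposition~\ref{prop_adj} (which descends to the abelianisations since $\overline{\mathbf{M}}N_{i|j}$ is exact and $\overline{\mathbf{M}}S_{j|i}$ preserves projectives), the relations $\overline{\mathbf{M}}W_{i|i}\circ\overline{\mathbf{M}}N_{i|j}\cong\overline{\mathbf{M}}N_{i|j}$ and $\overline{\mathbf{M}}N_{i|j}\circ\overline{\mathbf{M}}S_{j|i}\cong\overline{\mathbf{M}}W_{i|i}$ from the multiplication table, the fact that $\overline{\mathbf{M}}N_{i|i}$ is an idempotent exact endofunctor with essential image $\on{add}\setj{Be_{i_N}}$, and simple transitivity of $\mathbf{M}$. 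These together pin down the endomorphism of the Grothendieck group of $B\!\on{-mod}$ induced by $\overline{\mathbf{M}}N_{i|j}$: it is a non-negative integer matrix whose only nonzero rows record the composition multiplicities of $Be_{i_N}$, the idempotent $\overline{\mathbf{M}}N_{i|i}$ forces its trace to be $1$, and the matrix analysis forbids $\overline{\mathbf{M}}N_{i|j}$ from annihilating the whole block of $\mathbf{M}(\mathtt{i})$ into which it maps; these constraints should leave only a single nonzero column, i.e.\ $V$ with simple top. The case of $M_{i|j}$ then follows either by the symmetric argument or by transporting along $\overline{\mathbf{M}}M_{i|j}\cong\overline{\mathbf{M}}S_{i|i}\circ\overline{\mathbf{M}}N_{i|j}$.
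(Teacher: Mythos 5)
Your treatment of projectivity is correct and slightly more self-contained than the paper's: the paper gets projectivity of $\overline{\mathbf{M}}M_{i|j}$ by citing a lemma of Mazorchuk--Zhang together with the multiplication table, while you spell out an Eilenberg--Watts argument, first writing $M_{i|j}\simeq S_{i|i}\otimes_{\Lambda_n} N_{i|j}$ modulo cells strictly above the apex (correctly noting these are annihilated), then observing that $\overline{\mathbf{M}}S_{i|i}$ preserves projectives and tensoring a left-projective bimodule into a projective bimodule. That works. Your reduction of indecomposability to the statement that $V$ has simple top (equivalently, that the single nonzero row of the action matrix cannot be split as two rank-one pieces) is also the right reduction, and you correctly identify that the action matrices by themselves do not rule out a decomposition of the form $Be_{i_N}\otimes_\Bbbk e_{j_S}B\;\oplus\;Be_{i_N}\otimes_\Bbbk e_{j_N}B$.

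However, you do not actually close that gap. The final paragraph lists the adjunction $(S_{j|i},N_{i|j})$, idempotency of $\overline{\mathbf{M}}N_{i|i}$, and simple transitivity, and asserts that together they ``pin down the endomorphism of the Grothendieck group'' so as to ``leave only a single nonzero column.'' This conclusion does not follow from the constraints as you have stated them: the trace condition applies only to the diagonal $1$-morphisms $N_{i|i}$, and neither it nor the non-annihilation condition distinguishes an indecomposable $V=e_{j_S}B$ of length two from the decomposable $V=L_{j_S}\oplus L_{j_N}$, which induce the same column vector in the Grothendieck group. The step that actually does the work is more specific, and it is precisely the Hom computation with the adjunction: if $\overline{\mathbf{M}}U$ decomposed as above, the rank-one shape of each summand's action matrix would force both $e_{j_S}B$ and $e_{j_N}B$ to be simple, hence $e_{j_N}Be_{j_S}=0$; but
\[
\on{Hom}_{\mathbf{M}(\mathtt{i})}(Be_{j_S},Be_{j_N})\simeq\on{Hom}_{\mathbf{M}(\mathtt{i})}(\mathbf{M}S_{j|i}(Be_{i_N}),Be_{j_N})\simeq\on{Hom}_{\mathbf{M}(\mathtt{i})}(Be_{i_N},\mathbf{M}N_{i|j}(Be_{j_N}))\neq 0,
\]
a contradiction. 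You have the adjunction in hand and mention it as an ingredient, but never extract this contradiction from it, and the Grothendieck-group reformulation you substitute in its place does not rule out the problematic decomposition. Until that Hom argument (or an equivalent one) is carried out, the indecomposability claim is not proved.
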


\begin{proof}
 From Corollary~\ref{ProjTypeN} we know that $\overline{\mathbf{M}}N_{i|j}$ is a projective functor. From the multiplication table in Subsection \ref{subs_multtable}, together with \cite[Lemma 8]{MZ1}, we conclude that also $\overline{\mathbf{M}}M_{i|j}$ is a projective functor.
 
 If the action matrix of $\mathbf{M}U$ has a unique non-zero entry, it must equal $1$ and it immediately follows that $\overline{\mathbf{M}}U$ is indecomposable.
 
 Otherwise, in the unique non-zero row there are two non-zero columns, each with unique non-zero entry equal to $1$, associated to the indices $j_{S},j_{N}$. For $\overline{\mathbf{M}}U$ not to be indecomposable, we must then have 
 \[
  \overline{\mathbf{M}}U \simeq (Be_{k} \otimes_{\Bbbk} e_{j_{S}}B) \oplus (Be_{k} \otimes_{\Bbbk} e_{j_{N}}B),
 \]
  with $k = i_{S}$ if $U = M_{i|j}$, and $k = i_{N}$ if $U = N_{i|j}$. In this case, the action matrices of both summands have a unique non-zero entry which equals $1$, which implies that both $e_{j_{N}}B$ and $e_{j_{S}}B$ are simple. But, combining the description of action matrices given in Subsection \ref{subs_matrixblocks} with the adjunction $(S_{j|i},N_{i|j})$ yields
  \[
  \begin{aligned}
   &\on{Hom}_{\mathbf{M}(\mathtt{i})}(Be_{j_{S}},Be_{j_{N}}) \simeq \on{Hom}_{\mathbf{M}(\mathtt{i})}(\mathbf{M}S_{j|i}(Be_{i_{N}}),Be_{j_{N}}) \\ 
   &\simeq \on{Hom}_{\mathbf{M}(\mathtt{i})}(Be_{i_{N}},\mathbf{M}N_{i|j}(Be_{j_{N}})) \simeq \on{Hom}_{\mathbf{M}(\mathtt{i})}(Be_{i_{N}},Be_{i_{N}}) \neq 0
  \end{aligned}
  \]
  so $\on{Hom}_{\mathbf{M}(\mathtt{i})}(Be_{j_{S}},Be_{j_{N}}) \neq 0$, showing that $Be_{j_{N}}$ is not simple.
\end{proof}

\begin{proposition}\label{ActionDescribed}
 For $i,j \in \setj{1,\ldots,n}$, we have
 \[
  \begin{aligned}
   &\overline{\mathbf{M}}N_{i|j} \simeq Be_{i_{N}} \otimes_{\Bbbk} e_{j_{S}}B \otimes_{B} - \\
   &\overline{\mathbf{M}}S_{i|j} \simeq Be_{i_{S}} \otimes_{\Bbbk} \left(Be_{j_{N}}\right)^{\ast} \otimes_{B} - \\
   &\overline{\mathbf{M}}M_{i|j} \simeq Be_{i_{S}} \otimes_{\Bbbk} e_{j_{S}}B \otimes_{B} - \\
   &\overline{\mathbf{M}}W_{i|j} \simeq Be_{i_{N}} \otimes_{\Bbbk} \left(Be_{j_{N}}\right)^{\ast} \otimes_{B} -
  \end{aligned}
 \]
 where $(-)^{\ast} = \on{Hom}_{\Bbbk}(-,\Bbbk)$. 
\end{proposition}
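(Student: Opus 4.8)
The plan is to determine $\overline{\mathbf{M}}N_{i|j}$ first, read off $\overline{\mathbf{M}}S_{i|j}$ at the same time as its left adjoint, and then obtain $\overline{\mathbf{M}}M_{i|j}$ and $\overline{\mathbf{M}}W_{i|j}$ by composing these two functors according to the multiplication table. Two observations are used repeatedly. First, an indecomposable projective endofunctor of $\mathbf{M}(\mathtt{i}) \simeq B\!\on{-proj}$ is, up to isomorphism, of the form $Be_a \otimes_\Bbbk e_b B \otimes_B -$ for primitive idempotents $e_a, e_b$ of $B$, and the pair $(e_a, e_b)$ is determined by the functor. Second, since every two-sided cell strictly $J$-above $\mathcal{J}_k$ is annihilated by $\mathbf{M}$, the identities of Subsection~\ref{subs_multtable} become genuine isomorphisms---not merely isomorphisms modulo $J$-larger cells---after applying $\overline{\mathbf{M}}$. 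The single genuinely delicate point will be to distinguish, in the description of $\overline{\mathbf{M}}N_{i|j}$, the ``source'' idempotent $e_{j_S}$ from $e_{j_N}$.

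\emph{Step 1: the $1$-morphisms $N_{i|j}$ and $S_{i|j}$.} By Proposition~\ref{MNindecProj}, $\overline{\mathbf{M}}N_{i|j} \simeq Be_a \otimes_\Bbbk e_b B \otimes_B -$; since the essential image of $\mathbf{M}N_{i|j}$ is the non-zero category $\on{add}\{Be_{i_N}\}$, we must have $e_a = e_{i_N}$. To identify $e_b$ I use Proposition~\ref{prop_adj}, which (after interchanging $i$ and $j$) gives an adjunction $\overline{\mathbf{M}}S_{j|i} \dashv \overline{\mathbf{M}}N_{i|j}$. A routine tensor--hom computation, using the natural isomorphisms $(Be_{i_N})^{\ast} \otimes_B X \simeq \on{Hom}_B(X, Be_{i_N})^{\ast}$ and $\on{Hom}_\Bbbk(V^{\ast}, W) \simeq V \otimes_\Bbbk W$ for finite-dimensional $V$, identifies the left adjoint of $Be_{i_N} \otimes_\Bbbk e_b B \otimes_B -$ with $Be_b \otimes_\Bbbk (Be_{i_N})^{\ast} \otimes_B -$. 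Hence $\overline{\mathbf{M}}S_{j|i} \simeq Be_b \otimes_\Bbbk (Be_{i_N})^{\ast} \otimes_B -$, whose essential image is $\on{add}\{Be_b\}$. Comparing this with the essential image $\on{add}\{Be_{j_S}\}$ of $\mathbf{M}S_{j|i}$ described at the start of Section~\ref{s6}, and using that $B$ is basic, forces $e_b = e_{j_S}$. After relabelling, this yields the first two formulas,
\[
 \overline{\mathbf{M}}N_{i|j} \simeq Be_{i_N} \otimes_\Bbbk e_{j_S}B \otimes_B -, \qquad \overline{\mathbf{M}}S_{i|j} \simeq Be_{i_S} \otimes_\Bbbk (Be_{j_N})^{\ast} \otimes_B -.
\]

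\emph{Step 2: the $1$-morphisms $M_{i|j}$ and $W_{i|j}$.} Comparing the description of $\overline{\mathbf{M}}N_{j|j}$ just obtained with the action matrix $[\mathbf{M}N_{j|j}]$ computed in Section~\ref{s5} shows that the multiplicity of $Be_{j_N}$ in $\overline{\mathbf{M}}N_{j|j}(Be_{j_S}) = Be_{j_N} \otimes_\Bbbk e_{j_S}Be_{j_S}$, which equals $\dim_\Bbbk \on{End}_B(Be_{j_S})$, is $1$; hence $e_{j_S}Be_{j_S} \simeq \Bbbk$ for all $j$. The multiplication table gives $N_{i|j} \otimes_{\Lambda_n} M_{j|j} \simeq N_{i|j}$, so $\overline{\mathbf{M}}N_{i|j} \circ \overline{\mathbf{M}}M_{j|j} \simeq \overline{\mathbf{M}}N_{i|j}$. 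Writing $\overline{\mathbf{M}}M_{j|j} \simeq Be_{j_S} \otimes_\Bbbk e_d B \otimes_B -$ (Proposition~\ref{MNindecProj}, with target $e_{j_S}$ read off the essential image) and using $e_{j_S}Be_{j_S} \simeq \Bbbk$, the left-hand side equals $Be_{i_N} \otimes_\Bbbk e_d B \otimes_B -$, so comparison forces $e_d = e_{j_S}$ and therefore $\overline{\mathbf{M}}M_{i|j} \simeq Be_{i_S} \otimes_\Bbbk e_{j_S}B \otimes_B -$. Finally, $N_{i|j} \otimes_{\Lambda_n} S_{j|j} \simeq W_{i|j}$ gives $\overline{\mathbf{M}}W_{i|j} \simeq \overline{\mathbf{M}}N_{i|j} \circ \overline{\mathbf{M}}S_{j|j}$, and substituting the formulas from Step~1 and using $e_{j_S}Be_{j_S} \simeq \Bbbk$ once more collapses this composite to $Be_{i_N} \otimes_\Bbbk (Be_{j_N})^{\ast} \otimes_B -$, the remaining formula.

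\emph{Main obstacle.} Steps 1 and 2 are, apart from one point, formal manipulations with projective functors, adjunctions and the multiplication table. That point is the identification $e_b = e_{j_S}$ in Step~1: the action matrices of Section~\ref{s5} are invariant under the transposition $e_{j_S} \leftrightarrow e_{j_N}$ within a diagonal block, so they alone cannot detect which idempotent is the ``source''. It is the adjunction $(S_{j|i}, N_{i|j})$ of Proposition~\ref{prop_adj}, together with the fact that the essential image of a left adjoint of the form $Be_b \otimes_\Bbbk (Be_{i_N})^{\ast} \otimes_B -$ is governed precisely by $e_b$, that breaks the symmetry; I expect this to be the crux of the argument.
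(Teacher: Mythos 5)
Your proof is correct and follows the same overall strategy as the paper: treat $\overline{\mathbf{M}}M_{i|j}$ and $\overline{\mathbf{M}}N_{i|j}$ as indecomposable projective functors (Proposition~\ref{MNindecProj}), read off the ``target'' idempotent from the essential image, and pin down the ``source'' idempotent using the adjunction of Proposition~\ref{prop_adj} together with the multiplication table, with $S$ and $W$ then coming along for the ride. Within that shared skeleton your execution differs in two worthwhile ways. First, for $N_{i|j}$ you determine $e_b$ by writing down the left adjoint $Be_b \otimes_\Bbbk (Be_{i_N})^{\ast} \otimes_B -$ explicitly, invoking uniqueness of adjoints to identify it with $\overline{\mathbf{M}}S_{j|i}$, and comparing essential images with $\mathbf{M}S_{j|i}$; the paper instead runs a Hom-computation against the simple $L_{j_S}$. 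Both are valid; yours makes the role of the adjunction more transparent and simultaneously delivers the $S$-formula. Second, you extract $\dim e_{j_S}Be_{j_S} = 1$ up front by comparing the $N_{j|j}$-formula with its action matrix, and this allows you to dispatch $W_{i|j}$ in one line via $N_{i|j} \otimes_{\Lambda_n} S_{j|j} \simeq W_{i|j}$. The paper does not have that Cartan datum available at this point (it appears later, in Proposition~\ref{CartanMatrix}) and so takes a longer route for $W$, writing $T_{i|j} \simeq Be_{i_N} \otimes_\Bbbk R_{i|j}$ and controlling composition multiplicities of $R_{i|j}$. Your ordering of the facts is arguably cleaner.

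There is one small imprecision: in Step~2 the displayed computation with $N_{i|j} \otimes_{\Lambda_n} M_{j|j} \simeq N_{i|j}$ only determines the source idempotent of $\overline{\mathbf{M}}M_{j|j}$, yet you conclude ``therefore $\overline{\mathbf{M}}M_{i|j} \simeq Be_{i_S} \otimes_\Bbbk e_{j_S}B \otimes_B -$'' for arbitrary $i$. The general case needs one more line: write $\overline{\mathbf{M}}M_{i|j} \simeq Be_{i_S} \otimes_\Bbbk e_c B \otimes_B -$, apply the relation $N_{i|i} \otimes_{\Lambda_n} M_{i|j} \simeq N_{i|j}$ together with $\dim e_{i_S}Be_{i_S} = 1$ (which you already have for all indices), and compare with $\overline{\mathbf{M}}N_{i|j}$ to force $e_c = e_{j_S}$. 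This is exactly the same computation you performed for $M_{j|j}$, so the gap is cosmetic, but it should be spelled out rather than asserted by ``therefore.''
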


\begin{proof}
 Since $\overline{\mathbf{M}}N_{i|j}$ is indecomposable and its only non-zero row has index $i_{N}$, it is clear that there is some index $l$ such that 
 \[
  \overline{\mathbf{M}}N_{i|j} \simeq Be_{i_{N}} \otimes_{\Bbbk} e_{l}B  \otimes_{B} - .
 \]
 Let $L_{j_{S}}$ be the simple object associated to index $j_{S}$. Using the adjunction $(S_{j|i},N_{i|j})$, we have
 \[
   \on{Hom}_{\overline{\mathbf{M}}(\mathtt{i})}(\overline{\mathbf{M}}S_{j|i}(B), L_{j_{S}}) \simeq \on{Hom}_{\overline{\mathbf{M}}(\mathtt{i})}\left(B, \overline{\mathbf{M}}N_{i|j}(L_{j_{S}})\right)
 \]
 where the left-hand side is non-zero, since from the action matrix of $\mathbf{M}S_{j|i}$ we have $\mathbf{M}S_{j|i}(B) \simeq Be_{j_{S}}^{\oplus 2}$, or $\mathbf{M}S_{j|i}(B) \simeq Be_{j_{S}}$, in case $e_{j_{S}} = e_{j_{N}}$. In both cases we conclude that $\on{Hom}_{\overline{\mathbf{M}}(\mathtt{i})}(B,\overline{\mathbf{M}}N_{i|j}(L_{j_{S}}))$ is non-zero, which shows $l = j_{S}$. 
 
 From Proposition~\ref{MNindecProj} we know that $\overline{\mathbf{M}}M_{i|j}$ is an indecomposable projective functor, and so again there is some index $l$ such that 
 \[
  \overline{\mathbf{M}}M_{i|j} \simeq Be_{i_{S}} \otimes_{\Bbbk} e_{l}B \otimes_{B} - .
 \]
 The claim $l = j_{S}$ follows from the isomorphism $\overline{\mathbf{M}}M_{i|j} \simeq \overline{\mathbf{M}}M_{i|j} \circ \overline{\mathbf{M}}N_{j|j}$. 
 
 The claim about $\overline{\mathbf{M}}S_{i|j}$ follows directly from the adjunction $(N_{j|i},S_{i|j})$ together with the adjunction
 \[
  \left( Be_{i_{S}} \otimes_{\Bbbk} \left(Be_{j_{N}}\right)^{\ast} \otimes_{B} -,  Be_{j_{N}} \otimes_{\Bbbk} e_{i_{S}}B \otimes_{B} -   \right),
 \]
 which can be shown in the same manner as the adjunction in \cite[Lemma 45]{MM1}. 
 
 Finally, to see that $\overline{\mathbf{M}}W_{i|j} \simeq Be_{i_{N}} \otimes_{\Bbbk} \left(Be_{j_{N}}\right)^{\ast} \otimes_{B} -$, let $T_{i|j}$ be a $B$-$B$-bimodule such that $\overline{\mathbf{M}}W_{i|j} \simeq T_{i|j} \otimes_{B} -$.
 
 Since $\overline{\mathbf{M}}W_{i|j} \simeq \overline{\mathbf{M}}N_{i|i} \circ \overline{\mathbf{M}}W_{i|j}$, we have
 \[
  T_{i|j} \simeq Be_{i_{N}} \otimes_{\Bbbk} \left(e_{j_{S}}B \otimes_{B} T_{i|j}\right).
 \]
 Using this, write $T_{i|j} \simeq Be_{i_{N}} \otimes_{\Bbbk} R_{i|j}$. From the action matrix of $\mathbf{M}W_{i|j}$ we see that the composition multiplicity $[R_{i|j} : L_{j_{S}}]$ is $1$. Further, we have 
 \[
  \overline{\mathbf{M}}W_{i|j} \simeq \overline{\mathbf{M}}W_{i|j} \circ \overline{\mathbf{M}}S_{j|j},
 \]
 so
 \[
  Be_{i_{N}} \otimes_{\Bbbk} R_{i|j} \simeq Be_{i_{N}} \otimes_{\Bbbk} \left( R_{i|j} \otimes_{B} Be_{j_{S}} \right) \otimes_{\Bbbk} \left(Be_{j_{N}}\right)^{\ast} \simeq Be_{i_{N}} \otimes_{\Bbbk} \left(Be_{j_{N}}\right)^{\ast}
 \]
 where the last isomorphism is due to $\on{dim}\left( R_{i|j} \otimes_{B} Be_{j_{S}} \right) = [R_{i|j} : L_{j|s}] = 1$.
\end{proof}

\begin{proposition}\label{ProjInjA2}
 $e_{j_{S}}B \simeq \left(Be_{j_{N}}\right)^{\ast}$. In case $e_{j_{S}} = e_{j_{N}}$, this module is simple. Otherwise, it is of length two, with socle $L_{j_{N}}$ and top $L_{j_{S}}$.
 
 As a consequence, $\overline{\mathbf{M}}N_{i|j} \simeq \overline{\mathbf{M}}W_{i|j}$ and $\overline{\mathbf{M}}S_{i|j} \simeq \overline{\mathbf{M}}M_{i|j}$.
\end{proposition}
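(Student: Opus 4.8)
The plan is to determine the Loewy structure of the right $B$-modules $e_{j_{S}}B$ and $\big(Be_{j_{N}}\big)^{\ast}$ from the explicit descriptions of the functors $\overline{\mathbf{M}}N_{i|j}$ and $\overline{\mathbf{M}}W_{i|j}$ in Proposition~\ref{ActionDescribed} together with the action-matrix computations of Section~\ref{s5}, and then to identify $\big(Be_{j_{N}}\big)^{\ast}$ as an injective envelope. First I would record the composition factors. Evaluating $\overline{\mathbf{M}}N_{j|j} \simeq Be_{j_{N}} \otimes_{\Bbbk} e_{j_{S}}B \otimes_{B} -$ at the indecomposable projective $Be_{k}$ gives $Be_{j_{N}} \otimes_{\Bbbk} e_{j_{S}}Be_{k} \simeq Be_{j_{N}}^{\oplus \dim_{\Bbbk} e_{j_{S}}Be_{k}}$, so the action matrix $[N_{j|j}]$ has nonzero entries only in the row indexed by $Be_{j_{N}}$, where the $k$-th entry is $\dim_{\Bbbk} e_{j_{S}}Be_{k} = [e_{j_{S}}B : L_{k}]$ (using that $B$ is basic). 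By the analysis of the diagonal blocks in Subsection~\ref{subs_matrixblocks}, $[N_{j|j}]$ is supported in the block $A_{j|j}$, and $A_{j|j}(N)$ is either $[1]$ — precisely when $e_{j_{S}} = e_{j_{N}}$ — or a $2\times 2$ matrix whose unique nonzero row, indexed by $j_{N}$, has both entries equal to $1$, in the columns indexed by $j_{N}$ and $j_{S}$. Hence $e_{j_{S}}B$ is one-dimensional (equal to $L_{j_{N}}$) when $e_{j_{S}} = e_{j_{N}}$, and otherwise has composition factors $L_{j_{N}}$ and $L_{j_{S}}$, each with multiplicity one. Running the identical argument with $\overline{\mathbf{M}}W_{j|j} \simeq Be_{j_{N}} \otimes_{\Bbbk} \big(Be_{j_{N}}\big)^{\ast} \otimes_{B} -$ and using $[W_{j|j}] = [N_{j|j}]$ (Subsection~\ref{subs_matrixblocks}) shows that $\big(Be_{j_{N}}\big)^{\ast}$ has exactly the same composition factors.

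Next I would pin down the submodule structure. As $e_{j_{S}}B$ is an indecomposable projective right $B$-module it is local with top $L_{j_{S}}$; together with the previous paragraph this forces $e_{j_{S}}B$ to be simple, equal to $L_{j_{N}}$, when $e_{j_{S}} = e_{j_{N}}$, and uniserial of length two with top $L_{j_{S}}$ and socle $L_{j_{N}}$ otherwise. On the other side, $\big(Be_{j_{N}}\big)^{\ast}$ is the $\Bbbk$-dual of the indecomposable projective left module $Be_{j_{N}}$, hence an indecomposable injective right $B$-module with socle $\big(\on{top}(Be_{j_{N}})\big)^{\ast} = L_{j_{N}}$; in other words it is the injective envelope of $L_{j_{N}}$, and by the first paragraph it has length at most two. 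When $e_{j_{S}} = e_{j_{N}}$ both modules equal $L_{j_{N}}$ and there is nothing more to prove. When $e_{j_{S}} \ne e_{j_{N}}$, the essential inclusion $L_{j_{N}} = \on{soc}(e_{j_{S}}B) \hookrightarrow e_{j_{S}}B$ extends, by injectivity of $\big(Be_{j_{N}}\big)^{\ast}$, to a homomorphism $e_{j_{S}}B \to \big(Be_{j_{N}}\big)^{\ast}$ which is injective because its restriction to the essential submodule $\on{soc}(e_{j_{S}}B)$ is; since both modules have length two this homomorphism is an isomorphism. This gives $e_{j_{S}}B \simeq \big(Be_{j_{N}}\big)^{\ast}$ with the asserted Loewy data.

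For the consequence, observe that in the $B$-$B$-bimodules $Be_{i_{N}} \otimes_{\Bbbk} e_{j_{S}}B$ and $Be_{i_{S}} \otimes_{\Bbbk} e_{j_{S}}B$ appearing in Proposition~\ref{ActionDescribed} the second tensor factor enters only through its right $B$-module structure, the left action being carried by the first factor. Therefore the isomorphism of right $B$-modules just obtained induces isomorphisms of $B$-$B$-bimodules $Be_{i_{N}} \otimes_{\Bbbk} e_{j_{S}}B \simeq Be_{i_{N}} \otimes_{\Bbbk} \big(Be_{j_{N}}\big)^{\ast}$ and $Be_{i_{S}} \otimes_{\Bbbk} e_{j_{S}}B \simeq Be_{i_{S}} \otimes_{\Bbbk} \big(Be_{j_{N}}\big)^{\ast}$, and tensoring over $B$ with $-$ yields $\overline{\mathbf{M}}N_{i|j} \simeq \overline{\mathbf{M}}W_{i|j}$ and $\overline{\mathbf{M}}M_{i|j} \simeq \overline{\mathbf{M}}S_{i|j}$ by Proposition~\ref{ActionDescribed}.

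The only genuinely delicate point is the passage from equal composition factors to an actual isomorphism: two modules with the same Loewy length and the same composition multiplicities need not be isomorphic, so the injective-envelope argument (rather than a bare dimension count) is what does the work. A minor bookkeeping issue is the $S_{2}$-ambiguity in the normal form of $A_{j|j}(N)$ from Subsection~\ref{subs_matrixblocks}, but this is harmless since in every admissible normal form the two ones in the nonzero row of $A_{j|j}(N)$ sit in the columns indexed by $j_{N}$ and $j_{S}$, which is all the argument uses.
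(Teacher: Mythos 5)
Your proof is correct and follows essentially the same route as the paper: both read off the composition factors of $e_{j_{S}}B$ and $\left(Be_{j_{N}}\right)^{\ast}$ from the diagonal blocks $A_{j|j}(N)$ and $A_{j|j}(W)$ established in Section~\ref{s5}, and both conclude the isomorphism from the essential uniqueness of a nonsplit length-two module with top $L_{j_{S}}$ and socle $L_{j_{N}}$. The only stylistic difference is in the final step: the paper bounds the number of arrows $j_{N}\to j_{S}$ via $\dim e_{j_{S}}Be_{j_{N}}=1$, whereas you extend the essential inclusion $L_{j_{N}}\hookrightarrow e_{j_{S}}B$ along the injective envelope $\left(Be_{j_{N}}\right)^{\ast}$ and compare lengths; these are interchangeable.
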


\begin{proof}
 In the case $e_{j_{S}} = e_{j_{N}}$, the unique non-zero column of the matrix $[N_{i|j}]$ is indexed by $j_{S,N}$. This column has a unique non-zero entry equal to $1$, from which it follows that, for a simple $B$-module $L$, we have $[e_{j_{S,N}}B:L] \neq 0$ if and only if $L \simeq L_{j_{S,N}}$, for which we have $[e_{j_{S,N}}B:L]=1$.
 
 In case $e_{j_{S}} \neq e_{j_{N}}$, we find that the only non-zero columns of $[N_{i|j}]$ are those indexed by $j_{S}$ and $j_{N}$, both admitting a unique non-zero entry equal to $1$. This shows that the composition factors of $e_{j_{S}}B$ are $L_{j_{S}}$ and $L_{j_{N}}$, both with multiplicity one.
 
 The same arguments apply to $(Be_{j_{N}})^{\ast}$, if one replaces the matrix $[N_{i|j}]$ by $[S_{i|j}]$ in the above considerations.
 
 Finally, from the above observations about the matrix $[N_{i|j}]$ we see that the module $e_{j_{S}}B \otimes_{B} Be_{j_{N}} \simeq e_{j_{S}}Be_{j_{N}}$ is one-dimensional, so there is at most one arrow $j_{N} \rightarrow j_{S}$ in the quiver of $B$. This, together with the first part of the result, proves that the two modules are isomorphic.
 
 The last part of the statement is an immediate consequence of Proposition~\ref{ActionDescribed}.
\end{proof}

\begin{proposition}\label{CartanMatrix}
 For $U,V \in \setj{S,N}$ and $i,j \in \setj{1,\ldots, n}$, we have:
 \[
 \on{dim}e_{i_{U}}Be_{j_{V}} = \on{dim}\on{Hom}_{\mathbf{M}(\mathtt{i})}(Be_{i_{U}},Be_{j_{V}}) = 
  \begin{cases}
   1 \text{, if } i=j \text{ and } U = V; \\
   1 \text{, if } i=j, \; U=S \text{ and } V=N; \\
   0 \text{, otherwise.}
  \end{cases}
 \]
\end{proposition}

\begin{proof}
 The first two cases are an immediate consequence of Proposition~\ref{ProjInjA2}. 
  
  If there is any morphism $\alpha: Be_{i_{U}} \rightarrow Be_{j_{V}}$ outside the identity morphisms and those of the form $Be_{i_{S}} \rightarrow Be_{i_{N}}$, then again from our earlier description of $e_{k_{S}}B$ we see that $e_{k_{S}}B \otimes_{B} \alpha = 0$, for $k=1,\ldots,n$. 
  
  In view of the characterization given in Proposition~\ref{ActionDescribed}, this shows that the $\csym{D}_{n}$-stable ideal of $\mathbf{M}(\mathtt{i})$ generated by $\alpha$ does not coincide with all of $\mathbf{M}(\mathtt{i})$, and hence is a proper ideal, contradicting $\mathbf{M}$ being simple transitive.
\end{proof}

\begin{corollary}\label{LabelledQuiver}
 The quiver of $B$, together with the labelling we use above, is given as follows:
 \begin{itemize}
  \item it consists of $n$ connected components, labelled by $i \in \setj{1,\ldots,n}$, each of type $A_{1} = \bullet$ or $A_{2} = \bullet \rightarrow \bullet$;
  \item the connected component indexed by $i$ is of type $A_{1}$ if and only if $e_{i_{S}} = e_{i_{N}}$. Its unique vertex is labelled by $i_{S,N}$;
  \item the connected component indexed by $i$ is of type $A_{2}$ if and only if $e_{i_{S}} \neq e_{i_{N}}$. It is labelled as $i_{N} \rightarrow i_{S}$.
 \end{itemize}
\end{corollary}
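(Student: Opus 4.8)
The plan is to read the Gabriel quiver of $B$ directly off the $\on{Hom}$-space computation of Proposition~\ref{CartanMatrix}, using the standard description of the quiver of a basic finite-dimensional algebra: its vertices are the isomorphism classes of indecomposable projectives $Be$, and the number of arrows from the vertex of $Be$ to that of $Bf$ equals the multiplicity of $Bf/\on{rad}Bf$ in $\on{rad}(Be)/\on{rad}^{2}(Be)$. Recall from Proposition~\ref{CartanMatrix} that $\on{Hom}_{\mathbf{M}(\mathtt{i})}(Be,Bf)\cong eBf$, and that we have already fixed a complete, irredundant list of indecomposable projectives: one object $Be_{i_{S,N}}$ for each $i$ with $e_{i_{S}}=e_{i_{N}}$, and two objects $Be_{i_{N}},Be_{i_{S}}$ for each $i$ with $e_{i_{S}}\neq e_{i_{N}}$. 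These become the vertices, so the vertex set has cardinality $\on{rank}\mathbf{M}$, and it is labelled exactly as in the statement.

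First I would compute the dimension of each indecomposable projective from $\dim Be_{i_{U}}=\sum_{v}\dim e_{v}Be_{i_{U}}$, the sum being over all vertices $v$. By Proposition~\ref{CartanMatrix} only the vertices attached to the same index $i$ contribute: in the $A_{1}$ case this gives $\dim Be_{i_{S,N}}=1$, so $Be_{i_{S,N}}$ is simple; in the $A_{2}$ case it gives $\dim Be_{i_{S}}=1$ and $\dim Be_{i_{N}}=1+1=2$, so $Be_{i_{S}}\cong L_{i_{S}}$ is simple while $Be_{i_{N}}$ has length two with composition factors $L_{i_{N}}$ and $L_{i_{S}}$, each with multiplicity one. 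Since $Be_{i_{N}}$ is the projective cover of $L_{i_{N}}$, its top is $L_{i_{N}}$ and hence its radical is the simple module $L_{i_{S}}$; in particular $\on{rad}^{2}(Be_{i_{N}})=0$ and $\on{rad}(Be_{i_{N}})/\on{rad}^{2}(Be_{i_{N}})\cong L_{i_{S}}$.

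From these computations the arrows follow: a simple projective contributes no arrows, so the $A_{1}$ components are single vertices $i_{S,N}$; for an $A_{2}$ index the above shows there is exactly one arrow out of $i_{N}$, landing at $i_{S}$, and none out of $i_{S}$, so the component is $i_{N}\to i_{S}$ as claimed (and there are no loops). Finally, for $i\neq j$ Proposition~\ref{CartanMatrix} gives $\dim e_{i_{U}}Be_{j_{V}}=\dim e_{j_{V}}Be_{i_{U}}=0$ for all $U,V$, so no vertex attached to $i$ is a composition factor of a projective attached to $j$ and vice versa; hence there are no arrows between the two groups, and the quiver splits into the $n$ connected components described. The statement is essentially a repackaging of Proposition~\ref{CartanMatrix}, so no step is a genuine obstacle; the only point needing care is the orientation of the $A_{2}$ components, which is forced by the asymmetry $\dim e_{i_{S}}Be_{i_{N}}=1$ against $\dim e_{i_{N}}Be_{i_{S}}=0$ (equivalently, by the existence of a nonzero non-invertible map $Be_{i_{S}}\to Be_{i_{N}}$ but not conversely), placing $L_{i_{S}}$ in the radical of $Be_{i_{N}}$ and pointing the arrow from $i_{N}$ to $i_{S}$.
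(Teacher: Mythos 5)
Your proof is correct and follows the same route the paper leaves implicit: the paper simply asserts that the corollary is an immediate consequence of Propositions~\ref{ProjInjA2} and \ref{CartanMatrix}, and your argument is exactly the spelled-out version, reading the vertex set and the $\on{rad}/\on{rad}^2$ layer off the Cartan data of Proposition~\ref{CartanMatrix} (and in effect re-deriving the length-two structure of $Be_{i_N}$ that Proposition~\ref{ProjInjA2} records directly). The orientation check via the asymmetry $\dim e_{i_S}Be_{i_N}=1$, $\dim e_{i_N}Be_{i_S}=0$ is the right point to flag and is handled correctly.
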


\begin{proof}
 The result is an immediate consequence of Propositions~\ref{ProjInjA2} and \ref{CartanMatrix}.
\end{proof}

The labelled quiver described in Corollary~\ref{LabelledQuiver} should be compared with the quiver underlying the basic algebra $C$ satisfying $C\!\on{-proj} \simeq \mathbf{C}(\mathtt{i})$, where $\mathbf{C}$ is a cell birepresentation with apex $\mathcal{J}_{k}$. The choice of $j$ below does not matter as the different cell birepresentations are equivalent, hence we suppress it from our notation, writing $\mathbf{C}$ rather than $\mathbf{C}_{j}$. The quiver underlying $C$ is of the following form:
  \[
   \begin{tikzcd}[sep = small]
    M_{1|j} \arrow[d, "\alpha_{1}"] & M_{2|j} \arrow[d, "\alpha_{2}"] & \cdots & M_{n|j} \arrow[d, "\alpha_{n}"] \\
    N_{1|j} & N_{2|j} & \cdots & N_{n|j}
   \end{tikzcd}.
  \]
Its labelling illustrates the fact that the indecomposable objects of $\mathbf{C}(\mathtt{i})$ are indecomposable $1$-morphisms of $\mathcal{L}$, and the arrows correspond to the bimodule epimorphisms $M_{i|j} \rightarrow N_{i|j}$.

Consider the strong transformation $\Theta_{L_{j_{S}}}: \mathbf{P}_{\mathtt{i}} \rightarrow \overline{\mathbf{M}}$, uniquely determined by the assignment $\mathbb{1}_{\mathtt{i}} \mapsto L_{j_{S}}$. 
For a $1$-morphism $U$ of $\csym{D}_{n}$, we have 
\[
\Theta_{L_{j_{S}}}(U) \simeq \overline{\mathbf{M}}U(L_{j_{S}}),
\]
and so Proposition~\ref{ActionDescribed} implies that the codomain of $\Theta_{L_{j_{S}}}$ can be restricted to $\mathbf{M}$, since $\mathbf{M}$ is canonically embedded as the category of projective objects of $\overline{\mathbf{M}}$.  

Similarly to \cite[Theorem~6.2]{Zi2} and \cite[Proposition~9]{MM5}, we conclude that $\Theta_{L_{j_{S}}}$ induces a locally faithful strong transformation $\Sigma: \mathbf{C} \rightarrow \mathbf{M}$ which satisfies 
\[
\Sigma(U_{i|j}) \simeq \overline{\mathbf{M}}U_{i|j}(L_{j_{S}}).
\]

\begin{lemma}\label{FactorLoc}
 For any $i \in \setj{1,\ldots, n}$, the morphism $\mathbf{M}\alpha_{i}$ is an isomorphism if and only if $e_{i_{S}} = e_{i_{N}}$.
\end{lemma}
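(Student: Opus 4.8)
The plan is to compute $\mathbf{M}\alpha_{i}$ explicitly from the action description of Proposition~\ref{ActionDescribed} and then to read off invertibility, using the local faithfulness of $\Sigma$ for the one point where the action description is not enough. By Proposition~\ref{ActionDescribed}, as endofunctors of $\mathbf{M}(\mathtt{i})=B\!\on{-proj}$ we have $\mathbf{M}M_{i|j}\simeq T_{M}\otimes_{B}-$ and $\mathbf{M}N_{i|j}\simeq T_{N}\otimes_{B}-$, where $T_{M}=Be_{i_{S}}\otimes_{\Bbbk}e_{j_{S}}B$ and $T_{N}=Be_{i_{N}}\otimes_{\Bbbk}e_{j_{S}}B$ are $B$-$B$-bimodules, projective as left $B$-modules; accordingly $\mathbf{M}\alpha_{i}$ corresponds to a $B$-$B$-bimodule homomorphism $t\colon T_{M}\to T_{N}$, and $\mathbf{M}\alpha_{i}$ is an isomorphism precisely when $t$ is. So the whole statement reduces to deciding when $t$ is invertible.

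I would first dispatch the ``only if'' direction by a matrix/dimension count. If $\mathbf{M}\alpha_{i}$ is invertible then $\mathbf{M}M_{i|j}\simeq\mathbf{M}N_{i|j}$, hence $[\mathbf{M}M_{i|j}]=[\mathbf{M}N_{i|j}]$; but by Subsection~\ref{subs_matrixblocks} the unique non-zero row of $[\mathbf{M}M_{i|j}]$ is indexed by $i_{S}$ and that of $[\mathbf{M}N_{i|j}]$ by $i_{N}$, so $i_{S}=i_{N}$, i.e. $e_{i_{S}}=e_{i_{N}}$. Equivalently, by Corollary~\ref{LabelledQuiver}, if $e_{i_{S}}\neq e_{i_{N}}$ then $T_{M}$ and $T_{N}$ have $\Bbbk$-dimensions $\dim_{\Bbbk} e_{j_{S}}B$ and $2\dim_{\Bbbk} e_{j_{S}}B$, so no such $t$ can be bijective.

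For the ``if'' direction, assume $e_{i_{S}}=e_{i_{N}}$ and write $e:=e_{i_{S,N}}$, so $T_{M}\simeq T_{N}\simeq Be\otimes_{\Bbbk}e_{j_{S}}B$. The key observation is that this bimodule is a brick. Using the standard identification of Hom-spaces over a tensor product of algebras, $\End_{B\text{-}B}(Be\otimes_{\Bbbk}e_{j_{S}}B)\simeq\End_{B}(Be)\otimes_{\Bbbk}\End_{B^{\on{op}}}(e_{j_{S}}B)$; by Corollary~\ref{LabelledQuiver} the module $Be$ is simple, so $\End_{B}(Be)\simeq\Bbbk$, and by Proposition~\ref{ProjInjA2} the module $e_{j_{S}}B$ is either simple or uniserial of length two with non-isomorphic composition factors, so $\End_{B^{\on{op}}}(e_{j_{S}}B)\simeq\Bbbk$ as well; hence $\End_{B\text{-}B}(Be\otimes_{\Bbbk}e_{j_{S}}B)\simeq\Bbbk$. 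It follows that every non-zero element of $\Hom_{B\text{-}B}(T_{M},T_{N})\simeq\Bbbk$ is an isomorphism, so it remains only to verify $t\neq 0$. Here I would invoke the local faithfulness of $\Sigma$: the arrow $\alpha_{i}$ of the quiver of $C$ is a non-zero morphism of $\mathbf{C}(\mathtt{i})$, hence $\Sigma(\alpha_{i})=\overline{\mathbf{M}}\alpha_{i}(L_{j_{S}})\neq 0$; since $\overline{\mathbf{M}}\alpha_{i}$ extends $\mathbf{M}\alpha_{i}$, this forces $\mathbf{M}\alpha_{i}\neq 0$, i.e. $t\neq 0$, and therefore $t$ is an isomorphism.

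The step I expect to be the main obstacle is this last one in the ``if'' direction. Proposition~\ref{ActionDescribed} only shows that the source and target of $\mathbf{M}\alpha_{i}$ are abstractly isomorphic when $e_{i_{S}}=e_{i_{N}}$, not that $\mathbf{M}\alpha_{i}$ realizes an isomorphism; in fact $\overline{\mathbf{M}}$ is \emph{not} exact in the $1$-morphism variable, which is exactly why $\mathbf{M}\alpha_{i}$ genuinely fails to be invertible when $e_{i_{S}}\neq e_{i_{N}}$, even though $\alpha_{i}$ is a bimodule epimorphism whose kernel lies in $\mathcal{J}_{\mathrm{split}}$ and is therefore annihilated by $\mathbf{M}$. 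So the non-vanishing of $\mathbf{M}\alpha_{i}$ has to be imported from the comparison transformation $\Sigma$ coming from the cell birepresentation, and the brick computation is then what upgrades ``non-zero'' to ``invertible''.
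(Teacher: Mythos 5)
Your proof is correct, and the skeleton matches the paper's: show $\mathbf{M}\alpha_{i}$ is non-zero via faithfulness of $\Sigma$, then upgrade ``non-zero'' to ``isomorphism'' by a triviality-of-endomorphisms argument, with the ``only if'' direction being an easy dimension/isomorphism-type observation. The one point worth flagging is that you interpret $\mathbf{M}\alpha_{i}$ as the full natural transformation $\mathbf{M}M_{i|j}\Rightarrow \mathbf{M}N_{i|j}$, i.e.\ as the $B$-$B$-bimodule map $t\colon T_{M}\to T_{N}$, and accordingly compute $\End_{B\text{-}B}(Be\otimes_{\Bbbk}e_{j_{S}}B)\simeq\Bbbk$ via the K\"unneth formula for outer tensor products. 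The paper's proof instead works directly with the morphism $\Sigma(\alpha_{i})=(\overline{\mathbf{M}}\alpha_{i})_{L_{j_{S}}}$ in $\mathbf{M}(\mathtt{i})$: when $e_{i_{S}}=e_{i_{N}}$ this is a non-zero endomorphism of the object $Be_{i_{S,N}}$, whose endomorphism algebra is $e_{i_{S,N}}Be_{i_{S,N}}\simeq\Bbbk$ by Proposition~\ref{CartanMatrix}, so it is invertible; when $e_{i_{S}}\neq e_{i_{N}}$, the source and target are non-isomorphic indecomposables. The two readings of $\mathbf{M}\alpha_{i}$ are equivalent for the purposes of the lemma (your brick argument shows the bimodule map is an isomorphism precisely when its evaluation at $L_{j_{S}}$ is), but the paper's is marginally shorter since it bypasses the bimodule-endomorphism computation and uses only the already-established Cartan data. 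Your version does prove the formally stronger statement about the natural transformation itself, which is a fine bonus; and your closing remark about why one cannot conclude invertibility from the cell structure alone (because $\overline{\mathbf{M}}$ is not exact in the $1$-morphism variable) is an accurate diagnosis of why the $\Sigma$-faithfulness input is genuinely needed.
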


\begin{proof}
If $e_{i_{S}} = e_{i_{N}}$, then we have $\mathbf{M}M_{i|j} \simeq \mathbf{M}N_{i|j}$, and so $\mathbf{M}\alpha$ is a morphism between isomorphic objects: $\mathbf{M}\alpha \in \on{Hom}_{\mathbf{M}(\mathtt{i})}\big(\mathbf{M}M_{i|j}(L_{j_{S}}), \mathbf{M}N_{i|j}(L_{j_{S}})\big)$. The morphism $\mathbf{M}\alpha_{i}$ is non-zero since $\Sigma$ is faithful. Thus, $\mathbf{M}\alpha_{i}$ is an isomorphism, since the endomorphism algebra of $\mathbf{M}N_{i|j}(L_{j_{S}}) \simeq Be_{i_{N}}$ is simple.

If $e_{i_{S}} \neq e_{i_{N}}$, then $\mathbf{M}M_{i|j}(L_{j_{S}}) \not\simeq \mathbf{M}N_{i|j}(L_{j_{S}})$, which completes the proof.
\end{proof}

\begin{theorem}\label{Classification}
 Let $I = \setj{i^{1},\ldots,i^{k}} \subseteq \setj{1,\ldots,n}$ be the collection of indices for which we have $i_{S} = i_{N}$. Let $\euler{S} = \setj{\alpha_{i} \; |\; i \in I}$. The birepresentation $\mathbf{M}$ is equivalent to the localized birepresentation $\mathbf{C}\big[\euler{S}^{-1}\big]$.
\end{theorem}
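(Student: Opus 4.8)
The plan is to realize $\mathbf M$ as a localization by lifting, via the universal property of the coinverter, the locally faithful strong transformation $\Sigma\colon\mathbf C\to\mathbf M$ constructed before the statement, and then to check that the lift is a biequivalence. First I would record that $\euler S$ does generate a multiplicative $\csym D_n$‑stable collection $\mathcal S$ in $\mathbf C$ for which nothing surprising happens: for a $1$‑morphism $U_{a|b}$ the functor $\mathbf C U_{a|b}$ sends $\alpha_i$ either to the identity of the zero object (when $b\neq i$, by the multiplication table in Subsection~\ref{subs_multtable}) or to an endomorphism of an indecomposable object, which is nonzero — hence invertible, as its endomorphism ring is $\Bbbk$ — because $\Sigma$ is locally faithful and $\Sigma$ carries it to $\mathbf M U_{a|i}$ applied to the isomorphism $\mathbf M\alpha_i$ of Lemma~\ref{FactorLoc}. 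So $\mathbf C[\euler S^{-1}]$ is well defined and localizing at $\mathcal S$ changes nothing beyond inverting $\{\alpha_i\mid i\in I\}$. Computing the localization pointwise and applying Example~\ref{LocalizeA2} to the quiver of $C$ displayed above, $\mathbf C[\euler S^{-1}](\mathtt i)\simeq\mathbf C(\mathtt i)[\mathcal S(\mathtt i)^{-1}]$ is equivalent to $B'\!\on{-proj}$, where $B'$ is the path algebra of the quiver obtained from the quiver of $C$ by contracting the arrows $\alpha_i$, $i\in I$. In particular this category is finitary, so $\mathbf C[\euler S^{-1}]$ is a finitary birepresentation, hence simple transitive by Proposition~\ref{FinitaryLocalization}; comparing $B'$ with Corollary~\ref{LabelledQuiver} gives $\on{rank}\mathbf C[\euler S^{-1}]=2n-|I|=\on{rank}\mathbf M$ by \eqref{ThmII}, and the apex of $\mathbf C[\euler S^{-1}]$ is still $\mathcal J_k$.

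\textbf{Getting the lift and its easy properties.} Since $\Sigma$ is a strong transformation sending each $\alpha_i$ ($i\in I$) to an isomorphism (Lemma~\ref{FactorLoc}), it sends every morphism of $\mathcal S$ to an isomorphism; by the universal property of the coinverter there is a strong transformation $\overline\Sigma\colon\mathbf C[\euler S^{-1}]\to\mathbf M$ with $\overline\Sigma\circ\Upsilon\simeq\Sigma$, where $\Upsilon$ is the localization transformation. Then $\overline\Sigma$ is locally faithful: its kernel is a $\csym D_n$‑stable ideal of the simple transitive birepresentation $\mathbf C[\euler S^{-1}]$, hence zero or everything, and it cannot be everything since then $\Sigma=\overline\Sigma\circ\Upsilon$ would vanish, contradicting local faithfulness of $\Sigma$. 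It is also locally dense: its essential image is $\csym D_n$‑stable and nonzero, and $\mathbf M$ is transitive, so the essential image is all of $\mathbf M(\mathtt i)$.

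\textbf{The lift is an equivalence.} Since every result of Section~\ref{s6} up to Proposition~\ref{ActionDescribed} applies verbatim to the simple transitive birepresentation $\mathbf C[\euler S^{-1}]$, Proposition~\ref{ActionDescribed} and Proposition~\ref{CartanMatrix} give $\mathbf C[\euler S^{-1}]N_{q|q}(B'e_{q_N})\simeq B'e_{q_N}$ and $\mathbf C[\euler S^{-1}]M_{q|q}(B'e_{q_S})\simeq B'e_{q_S}$ with multiplicity one. Applying $\overline\Sigma_{\mathtt i}$ and using that it intertwines the actions, together with the fact that the essential image of $\mathbf M N_{q|q}$ lies in $\on{add}(Be_{q_N})$ and that of $\mathbf M M_{q|q}$ in $\on{add}(Be_{q_S})$, one gets $\overline\Sigma_{\mathtt i}(B'e_{q_N})\simeq Be_{q_N}^{\oplus c_q}$ and $\overline\Sigma_{\mathtt i}(B'e_{q_S})\simeq Be_{q_S}^{\oplus d_q}$ with $c_q,d_q\geq 1$ (nonzero by local faithfulness). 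Now I would invoke local density on the indecomposable $Be_{p_N}$ (respectively $Be_{p_S}$): a preimage of it decomposes into indecomposables of $\mathbf C[\euler S^{-1}](\mathtt i)$, all of whose $\overline\Sigma_{\mathtt i}$‑images are self‑powers as above, so Krull--Schmidt in $\mathbf M(\mathtt i)$ forces the preimage to be $B'e_{p_N}$ itself with $c_p=1$ (respectively $B'e_{p_S}$, $d_p=1$). Thus $\overline\Sigma_{\mathtt i}$ induces a label‑preserving bijection on indecomposable objects, and on each $\on{Hom}(B'e_p,B'e_q)\to\on{Hom}(Be_p,Be_q)$ it is an injective $\Bbbk$‑linear map between spaces of equal finite dimension — the dimension being computed by the same formula of Proposition~\ref{CartanMatrix} on both sides — hence bijective. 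So $\overline\Sigma_{\mathtt i}$ is full, faithful and essentially surjective, i.e.\ an equivalence, for the unique object $\mathtt i$; therefore $\overline\Sigma$ is a biequivalence and $\mathbf M\simeq\mathbf C[\euler S^{-1}]$.

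\textbf{Main obstacle.} The delicate point is the pointwise identification of the underlying category of $\mathbf C[\euler S^{-1}]$ with $B'\!\on{-proj}$ together with the fact that $B'$ has exactly the labelled quiver of Corollary~\ref{LabelledQuiver}: this is what makes $\mathbf C[\euler S^{-1}]$ finitary, and only once that is in place can one invoke Proposition~\ref{FinitaryLocalization} and re‑run the structural analysis of Section~\ref{s6} inside $\mathbf C[\euler S^{-1}]$. The remaining step of genuine content is the observation that the idempotent $1$‑morphisms $N_{q|q}$ and $M_{q|q}$ single out one indecomposable object each, which pins $\overline\Sigma_{\mathtt i}$ down on indecomposables; after that, local density and the Cartan matrix comparison finish the argument essentially formally.
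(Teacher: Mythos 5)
Your proposal is correct and its backbone coincides with the paper's: lift the locally faithful $\Sigma\colon\mathbf C\to\mathbf M$ through the universal property of the coinverter to $\overline\Sigma\colon\mathbf C[\euler S^{-1}]\to\mathbf M$, then show the lift is a local equivalence by an essential-surjectivity argument plus a dimension count on hom-spaces. The difference is one of economy. The paper never needs to know that $\mathbf C[\euler S^{-1}]$ is itself simple transitive; it compares the quiver of $\mathbf C[\euler S^{-1}](\mathtt i)$, computed explicitly from Example~\ref{LocalizeA2}, directly with the labelled quiver of $\mathbf M(\mathtt i)$ from Corollary~\ref{LabelledQuiver}, observes that both $\Upsilon$ and $\Sigma$ act by contracting the same $A_2$-components, and then handles faithfulness of $\Sigma[\euler S^{-1}]$ with a one-line observation — since all hom-spaces in $\mathbf C[\euler S^{-1}]$ are at most one-dimensional, one only needs $\Sigma[\euler S^{-1}](\alpha_i)=\Sigma(\alpha_i)\neq 0$. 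You instead first establish simple transitivity of $\mathbf C[\euler S^{-1}]$ via Proposition~\ref{FinitaryLocalization}, then re-run Propositions~\ref{ActionDescribed} and~\ref{CartanMatrix} inside $\mathbf C[\euler S^{-1}]$ to re-derive its quiver, use a stable-ideal argument for faithfulness, and a Krull--Schmidt argument to pin down the bijection on indecomposables. All of this is sound (the stable-ideal argument does genuinely depend on Proposition~\ref{FinitaryLocalization}, which the paper states but does not actually invoke in this proof), but it duplicates work: once you have computed that $\mathbf C[\euler S^{-1}](\mathtt i)\simeq B'\!\on{-proj}$ with $B'$ having exactly the quiver of Corollary~\ref{LabelledQuiver}, the equidimensionality and one-dimensionality of homs that the paper uses are already in hand, and the structural re-derivation is unnecessary. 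Your "black-box" route does have the conceptual virtue of treating $\mathbf C[\euler S^{-1}]$ and $\mathbf M$ symmetrically, so it would transfer more readily to a setting where the localized quiver were not as easy to compute explicitly.
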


\begin{proof}

From $\overline{\mathbf{M}}N_{i|j}(L_{j_{S}}) \simeq Be_{i_{N}}$ and $\overline{\mathbf{M}}M_{i|j}(L_{j_{S}}) \simeq Be_{i_{S}}$, we see that $\Sigma$ is essentially surjective. Further, we have previously observed that $\Sigma$ is faithful. 
 
 From Lemma \ref{FactorLoc} and the universal property of $\mathbf{C}\big[ \euler{S}^{-1}\big]$, we see that there is a strong transformation $\Sigma[\euler{S}^{-1}]$ such that 
 \[
  \begin{tikzcd}
   \mathbf{C} \arrow[r, "\Upsilon"] \arrow[d, "\Sigma"] & \mathbf{C}[\euler{S}^{-1}] \arrow[dl, "{\Sigma[\euler{S}^{-1}]}"] \\
    \mathbf{M} 
  \end{tikzcd}
 \]
 commutes up to invertible modification. Since $\Sigma$ is essentially surjective, so is $\Sigma[\euler{S}^{-1}]$.
 
 We now claim that, given indecomposable objects $X,Y \in \mathbf{C}[\euler{S}^{-1}]$, the hom-spaces $\on{Hom}_{\mathbf{C}[\euler{S}^{-1}]}(X,Y)$ and $\on{Hom}_{\mathbf{C}[\euler{S}^{-1}]}\big(\Sigma[\euler{S}^{-1}](X),\Sigma[\euler{S}^{-1}](Y)\big)$ are equidimensional. To see this, we recall that we have explicitly described the quiver for the category $\mathbf{C}[\euler{S}^{-1}](\mathtt{i})$ in Example~\ref{LocalizeA2} and that it is the same as the quiver for $\mathbf{M}(\mathtt{i})$. 
 Following the description from Example~\ref{LocalizeA2}, we also find that, on the level of quivers, $\Upsilon$ is given by ''contracting'' the $A_{2}$-components of the quiver of $\mathbf{C}(\mathtt{i})$ labelled by indices $i$ such that $e_{i_{S}} = e_{i_{N}}$. 
 Using Proposition~\ref{CartanMatrix} and Lemma \ref{FactorLoc}, the same conclusion can be made about $\Sigma$,  which proves the claim about equidimensionality.
 
 Next we show that $\Sigma[\euler{S}^{-1}]$ is faithful. To do so, we show that it is such on the level of indecomposable objects. Since all the $\on{Hom}$-spaces in $\mathbf{C}[\euler{S}^{-1}]$ are at most $1$-dimensional, it suffices that we show that $\Sigma[\euler{S}^{-1}](\alpha_{i}) \neq 0$, for all $i \in \setj{1,\ldots,n}$. Recall that $\Sigma$ is faithful, so
 \[
  \Sigma[\euler{S}^{-1}](\alpha_{i}) = \Sigma[\euler{S}^{-1}]\circ \Upsilon (\alpha_{i}) = \Sigma(\alpha_{i}) \neq 0
 \]
 which proves that $\Sigma[\euler{S}^{-1}]$ is faithful.
 
 We conclude that, for any objects $X,Y\in \mathbf{C}[\euler{S}^{-1}]$, the map
 \[
  \on{Hom}_{\mathbf{C}[\euler{S}^{-1}]}(X,Y) \xrightarrow{\Sigma[\euler{S}^{-1}]_{X,Y}} \on{Hom}_{\mathbf{C}[\euler{S}^{-1}]}(\Sigma[\euler{S}^{-1}](X),\Sigma[\euler{S}^{-1}](Y))
 \]
 is an injective $\Bbbk$-linear map between equidimensional spaces, hence a bijection. Thus $\Sigma[\euler{S}^{-1}]$ is essentially surjective, full and faithful. The result follows.
\end{proof}

\begin{proposition}
 Given $I,I' \subseteq \setj{1,\ldots,n}$, let $\euler{S} = \setj{\alpha_{i} \; | \; i \in I}$ and $\euler{S}' = \setj{\alpha_{i} \; | \; i \in I'}$. The birepresentations $\mathbf{C}[\euler{S}^{-1}], \mathbf{C}[\euler{S}'^{-1}]$ are equivalent if and only if $I = I'$.
\end{proposition}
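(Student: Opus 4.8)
The plan is to leverage the rank formula together with the classification from Theorem~\ref{Classification}. First, observe that if $I = I'$, then $\euler{S} = \euler{S}'$ as collections of morphisms, so $\mathbf{C}[\euler{S}^{-1}] = \mathbf{C}[\euler{S}'^{-1}]$ and in particular they are equivalent; this direction is immediate. The substance is in the converse.

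For the converse, I would argue by contraposition, assuming $I \neq I'$ and showing $\mathbf{C}[\euler{S}^{-1}] \not\simeq \mathbf{C}[\euler{S}'^{-1}]$. From Example~\ref{LocalizeA2} and the pointwise computation of the localization, the underlying category $\mathbf{C}[\euler{S}^{-1}](\mathtt{i})$ is the category of projectives over the algebra whose quiver is obtained from the $A_2^{\sqcup n}$-quiver of $C$ by contracting the $A_2$-components indexed by $i \in I$. Hence $\on{rank}\mathbf{C}[\euler{S}^{-1}] = 2n - |I|$, and likewise $\on{rank}\mathbf{C}[\euler{S}'^{-1}] = 2n - |I'|$. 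So if $|I| \neq |I'|$, the two birepresentations have different ranks and cannot be equivalent. It remains to treat the case $|I| = |I'|$ but $I \neq I'$; here the ranks coincide, so a finer invariant is needed.

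For the remaining case, the idea is to use the labelling of the quiver by the $1$-morphisms $U_{i|j}$ of $\mathcal{J}_k$, which is preserved by any equivalence of birepresentations (an equivalence $\mathbf{C}[\euler{S}^{-1}] \simeq \mathbf{C}[\euler{S}'^{-1}]$ intertwines the actions of $\csym{D}_n$, hence matches up the essential images of the functors $\mathbf{M}U_{i|j}$). Concretely, pick an index $i_0 \in I \setminus I'$ (possible since $I \neq I'$ and $|I|=|I'|$). In $\mathbf{C}[\euler{S}^{-1}]$, Lemma~\ref{FactorLoc} (applied to $\mathbf{M} = \mathbf{C}[\euler{S}^{-1}]$, whose associated index set is exactly $I$) shows that $\mathbf{C}[\euler{S}^{-1}]\alpha_{i_0}$ is an isomorphism, equivalently $\mathbf{C}[\euler{S}^{-1}]M_{i_0|j} \simeq \mathbf{C}[\euler{S}^{-1}]N_{i_0|j}$; in $\mathbf{C}[\euler{S}'^{-1}]$, since $i_0 \notin I'$, the same lemma shows $\mathbf{C}[\euler{S}'^{-1}]M_{i_0|j} \not\simeq \mathbf{C}[\euler{S}'^{-1}]N_{i_0|j}$. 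An equivalence of birepresentations would send $M_{i_0|j}$ and $N_{i_0|j}$ to functors that are isomorphic in one case and non-isomorphic in the other, a contradiction. Therefore $\mathbf{C}[\euler{S}^{-1}] \not\simeq \mathbf{C}[\euler{S}'^{-1}]$, completing the proof.

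I expect the main (mild) obstacle to be the careful bookkeeping in the last step: making precise that an equivalence of birepresentations really does preserve the isomorphism type of the endofunctor $\mathbf{M}U_{i|j}$ (and hence preserves the collapse-or-not dichotomy of the component indexed by $i_0$), and confirming that the index set associated to the birepresentation $\mathbf{C}[\euler{S}^{-1}]$ in the sense of Theorem~\ref{Classification} is precisely $I$. Both follow from the naturality of $\Sigma$ and from the identification $\mathbf{C}[\euler{S}^{-1}](\mathtt{i}) \simeq \mathbf{C}(\mathtt{i})[\euler{S}^{-1}]$, but they deserve to be spelled out rather than waved through.
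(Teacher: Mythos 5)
Your argument is correct and follows essentially the same route as the paper: the invariant that distinguishes the localizations is the set of indices $i$ for which the images of $M_{i|j}$ and $N_{i|j}$ under the birepresentation become isomorphic, and this set is preserved under equivalence of birepresentations. The paper gets there directly from Proposition~\ref{ActionDescribed}, observing that $\overline{\mathbf{C}[\euler{S}^{-1}]}M_{i|j}\simeq\overline{\mathbf{C}[\euler{S}^{-1}]}N_{i|j}$ iff $i\in I$, and then transports the condition along the equivalence to conclude $I\subseteq I'$ (and hence $I=I'$ by symmetry); you reach the same invariant via Lemma~\ref{FactorLoc}, which is fine since $\mathbf{C}[\euler{S}^{-1}]$ is indeed a finitary simple transitive birepresentation by Proposition~\ref{FinitaryLocalization}. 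Two small remarks. First, your preliminary rank case $|I|\neq|I'|$ is logically redundant: whenever $I\neq I'$, one of $I\setminus I'$ or $I'\setminus I$ is nonempty, so the final argument (pick $i_0$ in the difference and compare) already covers everything; the rank observation is a nice sanity check but not a needed step. Second, you are right that one must confirm the index set attached to $\mathbf{C}[\euler{S}^{-1}]$ in the sense of Section~\ref{s6} is exactly $I$; this follows from the explicit description of the quiver of $\mathbf{C}[\euler{S}^{-1}](\mathtt{i})$ in Example~\ref{LocalizeA2} (the $A_2$-components collapsed are exactly those indexed by $I$), which is the same fact the paper tacitly uses when it invokes Proposition~\ref{ActionDescribed}. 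So there is no gap, only a slightly more roundabout packaging of the same idea.
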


\begin{proof}
 Clearly we only need to prove that $\mathbf{C}[\euler{S}^{-1}] \simeq \mathbf{C}[\euler{S}'^{-1}]$ implies $I = I'$. From Proposition~\ref{ActionDescribed} we find that $\overline{\mathbf{C}[\euler{S}^{-1}]}M_{i|j} \simeq \overline{\mathbf{C}[\euler{S}^{-1}]}N_{i|j}$ if and only if $i \in I$.
 
 If $\mathbf{C}[\euler{S}^{-1}] \simeq \mathbf{C}[\euler{S}'^{-1}]$, then 
 \[
\overline{\mathbf{C}[\euler{S}^{-1}]}M_{i|j} \simeq \overline{\mathbf{C}[\euler{S}^{-1}]}N_{i|j} \text{ implies }\overline{\mathbf{C}[\euler{S}'^{-1}]}M_{i|j} \simeq \overline{\mathbf{C}[\euler{S}'^{-1}]}N_{i|j},
 \]
 so
 $
  i \in I \text{ implies } i \in I'
 $
 and the result follows.
\end{proof}

\begin{corollary}\label{FinalClaim}
 The map  
 \begin{equation}
  \begin{aligned}
 \Big\{ 
  \begin{aligned}
  \text{Subsets of } \setj{1,\ldots,n}
  \end{aligned}
&\Big\}
 \longrightarrow 
  \Bigg\{ 
  \begin{aligned}
  \text{Simple} &\text{ transitive birepresentations} \\ &\text{of } \csym{D}_{n} \text{ with apex } \mathcal{J}_{k}
  \end{aligned}
\Bigg\}/\simeq
 \\
 &I \longrightarrow  \mathbf{C}[\euler{S}^{-1}]
 \end{aligned}
  \end{equation}
is a bijection. Further, $\on{rank}\mathbf{C}[\euler{S}^{-1}] = 2n - |I|$.
\end{corollary}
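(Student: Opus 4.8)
The plan is to obtain Corollary~\ref{FinalClaim} by assembling the results established earlier in this section, in four steps: surjectivity, injectivity, well-definedness of the target, and the rank count. Surjectivity is precisely the content of Theorem~\ref{Classification}: any simple transitive birepresentation $\mathbf{M}$ of $\csym{D}_{n}$ with apex $\mathcal{J}_{k}$ satisfies $\mathbf{M} \simeq \mathbf{C}\big[\euler{S}^{-1}\big]$ for $I = \setj{i \mid e_{i_{S}} = e_{i_{N}}}$, so it lies in the image. Injectivity is exactly the Proposition immediately preceding the Corollary, which shows $\mathbf{C}\big[\euler{S}^{-1}\big] \simeq \mathbf{C}\big[\euler{S}'^{-1}\big]$ forces $I = I'$.

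What remains is first to check that the map is well-defined, i.e. that for each $I \subseteq \setj{1,\ldots,n}$ the birepresentation $\mathbf{C}\big[\euler{S}^{-1}\big]$ really is simple transitive, finitary, and of apex $\mathcal{J}_{k}$. Since $\euler{S}$ is a $\csym{D}_{n}$-stable collection in the finitary simple transitive birepresentation $\mathbf{C}$, Proposition~\ref{FinitaryLocalization} gives that $\mathbf{C}\big[\euler{S}^{-1}\big]$ is simple transitive. It is finitary because, by Example~\ref{LocalizeA2}, the category $\mathbf{C}\big[\euler{S}^{-1}\big](\mathtt{i})$ is the category of projectives over the path algebra of the quiver obtained from the quiver underlying $C$ (which has $n$ connected components, each of type $A_{2}$) by contracting the components indexed by $I$, and this algebra is finite-dimensional. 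For the apex: weighted colimits in $[\csym{D}_{n},\mathbf{Cat}_{\Bbbk}^{\euler{D}}]$ are computed pointwise, so the action of a $1$-morphism on $\mathbf{C}\big[\euler{S}^{-1}\big]$ is the functor induced on the localization by its action on $\mathbf{C}$; hence every $1$-morphism annihilated by $\mathbf{C}$ is annihilated by $\mathbf{C}\big[\euler{S}^{-1}\big]$, and in particular so is every two-sided cell $\mathcal{J}'$ with $\mathcal{J}' >_{J} \mathcal{J}_{k}$. On the other hand $\mathcal{J}_{k}$ is not annihilated, since the action matrix of $N_{i|i}^{(k)}$ remains non-zero under the contraction of Example~\ref{LocalizeA2}. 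As the non-annihilated cells therefore lie among $\setj{\mathcal{J}_{k},\mathcal{J}_{k+1},\ldots}$ and $\mathcal{J}_{k}$ is among them, the apex of $\mathbf{C}\big[\euler{S}^{-1}\big]$ is $\mathcal{J}_{k}$.

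For the rank, recall that the cell birepresentation $\mathbf{C}$ has rank $2n$, its indecomposable objects being the $1$-morphisms $M_{i|j}^{(k)}$ and $N_{i|j}^{(k)}$ for $i = 1,\ldots,n$, as shown in the quiver underlying $C$ above. By Example~\ref{LocalizeA2}, the localization transformation $\Upsilon : \mathbf{C} \rightarrow \mathbf{C}\big[\euler{S}^{-1}\big]$ contracts, for each $i \in I$, the $A_{2}$-component $M_{i|j}^{(k)} \rightarrow N_{i|j}^{(k)}$ of this quiver to a single vertex, decreasing the number of isomorphism classes of indecomposables by exactly $|I|$; hence $\on{rank}\mathbf{C}\big[\euler{S}^{-1}\big] = 2n - |I|$. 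Equivalently, applying Theorem~\ref{Classification} to $\mathbf{M} = \mathbf{C}\big[\euler{S}^{-1}\big]$ identifies $I$ with $\setj{i \mid e_{i_{S}} = e_{i_{N}}}$ and turns this into the rewriting of \eqref{ThmII} recorded at the start of the section.

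The step I expect to demand genuine care is the well-definedness, and within it the only non-formal point is verifying that $\mathcal{J}_{k}$ survives the localization; everything else — simple transitivity, finitarity, the vanishing of the larger cells, and the rank count — is a formal consequence of Proposition~\ref{FinitaryLocalization}, Example~\ref{LocalizeA2}, and the pointwise nature of the localization, while surjectivity and injectivity are direct invocations of Theorem~\ref{Classification} and of the Proposition preceding the Corollary.
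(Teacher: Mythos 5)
Your proof is correct and follows the route the paper intends: surjectivity from Theorem~\ref{Classification}, injectivity from the immediately preceding proposition, and the rank formula from Example~\ref{LocalizeA2}. The paper leaves the well-definedness of the map (that each $\mathbf{C}[\euler{S}^{-1}]$ is indeed simple transitive, finitary, and has apex $\mathcal{J}_{k}$) implicit, and your explicit verification of these points via Proposition~\ref{FinitaryLocalization}, the pointwise computation of the localization, and the non-vanishing of the $N_{i|i}^{(k)}$ action is exactly the right supplement.
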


Theorem~\ref{thm_main}$(iii)$ follows from Theorem~\ref{Classification} and Corollary~\ref{FinalClaim}.

\vspace{5mm}

\noindent
Helena Jonsson, Department of Mathematics, Uppsala University, Box. 480,
SE-75106, Uppsala, SWEDEN, email: {\tt helena.jonsson\symbol{64}math.uu.se}

Mateusz Stroi{\' n}ski, Department of Mathematics, Uppsala University, Box. 480,
SE-75106, Uppsala, SWEDEN, email: {\tt mateusz.stroinski\symbol{64}math.uu.se}

\end{document}